\newtheorem{theorem}{Theorem}[section]
\newtheorem{definition}[theorem]{Definition}
\newtheorem{lemma}[theorem]{Lemma}
\newtheorem{proposition}[theorem]{Proposition}
\title{On line colorings of finite projective spaces\thanks{Research supported by: G. A-P. partially supported by CONACyT-M{\' e}xico under Projects 282280 and PAPIIT-M{\' e}xico under Project IN107218. 
Gy. K. partially supported by the bilateral Slovenian-Hungarian Joint Research Project, grant no. NN 114614 (in Hungary) and N1-0032 (in Slovenia).
C. R-M. partially supported by PAPIIT-M{\' e}xico under Project IN107218. A. V-{\' A}. partially supported by SNI of CONACyT-M{\' e}xico.}}
\author{Gabriela Araujo-Pardo \footnotemark[2] \and
Gy{\" o}rgy Kiss \footnotemark[3] \and
Christian Rubio-Montiel \footnotemark[5] \and
Adri{\' a}n V{\' a}zquez-{\' A}vila \footnotemark[4]}
\begin{document}

\maketitle

\def\thefootnote{\fnsymbol{footnote}}
\footnotetext[2]{Instituto de Matem{\' a}ticas, Universidad Nacional Aut{\' o}noma de M{\' e}xico, Ciudad Universitaria, 04510, Mexico City, Mexico, {\tt garaujo@matem.unam.mx}.}
\footnotetext[5]{Divisi{\' o}n de Matem{\' a}ticas e Ingenier{\' i}a at FES Acatl{\' a}n, Universidad Nacional Aut{\' o}noma de M{\' e}xico, 53150, State of Mexico, Mexico, {\tt christian.rubio@acatlan.unam.mx}.}
\footnotetext[3]{Department of Geometry and MTA-ELTE Geometric and Algebraic Combinatorics Research Group, E{\" o}tv{\" o}s Lor{\' a}nd University, H-1117 Budapest, P{\' a}zm{\' a}ny s. 1/c, Hungary, and FAMNIT, University of Primorska, 6000 Koper, Glagolja\v ska 8, Slovenia, {\tt kissgy@cs.elte.hu}.}
\footnotetext[4]{Subdirecci{\' o}n de Ingenier{\' i}a y Posgrado, Universidad Aeron{\' a}utica en Quer{\' e}taro, 
Parque Aeroespacial Quer{\' e}taro, 76270, Quer{\' e}taro, M{\' e}xico, {\tt adrian.vazquez@unaq.edu.mx}.}

\begin{abstract}
In this paper, we prove lower and upper bounds on the achromatic and the pseudoachromatic indices 
of the $n$-dimensional finite projective space of order $q$.
\end{abstract}




\section{Introduction}

The results given in this paper are related to the well-known combinatorial problem
called the Erd{\H o}s-Faber-Lov{\' a}sz Conjecture (for short EFL Conjecture), see  \cite{MR0409246}.

Let $\mathbf{S}$ be a finite linear space. A \emph{coloring} of $\mathbf{S}$ with $k$ colors is an assignment of the lines of $\mathbf{S}$ to a set of colors $[k]:=\{1,\dots,k\}$. A coloring of $\mathbf{S}$ is called \emph{proper} if any two intersecting lines have different colors. The \emph{chromatic index} $\chi'(\mathbf{S})$ of $\mathbf{S}$ is the smallest $k$ such that there exists a proper coloring of $\mathbf{S}$ with $k$ colors. Erd{\H o}s, Faber and Lov{\' a}sz conjectured (\cite{MR0409246,MR602413}) that the chromatic index of any finite linear space $\mathbf{S}$ cannot exceed 
the number of its points, so if $\mathbf{S}$ has $v$ points then\[\chi'(\mathbf{S})\leq v.\]

In \cite{MR1038405} the EFL Conjecture was proved for one of the most studied linear spaces, namely for 
the $n$-dimensional finite projective space of order $q$, $\mathrm{PG}(n,q).$ In this case it is known that 
\[\chi'(\mathrm{PG}(n,q))\leq \frac{q^{n+1}-1}{q-1}.\]

Three of this article's authors proved the EFL Conjecture for some linear spaces (\cite{ARV,MR3560871}). Moreover, in \cite{MR3249588,MR3774452,MR2778722} two of them have considered different types of colorations that expand the notion of the chromatic index for graphs: the achromatic and the pseudoachromatic indices. 
Related problems were intensively studied by several authors, see \cite{MR543176,MR0256930,MR0272662,MR989126}. Furthermore, in \cite{MR695809} Colbourn and Colbourn investigated these parameters for block designs (see also \cite{MR1178507}).

A coloring of $\mathbf{S}$ is called \emph{complete} if each pair of colors appears on at least one point of $\mathbf{S}$. It is not hard to see that any proper coloring of $\mathbf{S}$ with $\chi'(\mathbf{S})$ colors is a complete coloring. The \emph{achromatic index} $\alpha'(\mathbf{S})$ of $\mathbf{S}$ is the largest $k$ such that there exists a proper and complete coloring of $\mathbf{S}$ with $k$ colors. The \emph{pseudoachromatic index} $\psi'(\mathbf{S})$ of $\mathbf{S}$ is the largest $k$ such that there exists a complete coloring (not necessarily proper) of $\mathbf{S}$ with $k$ colors. Clearly we have that 
\begin{equation}\label{iquality:chi_alpha_psi}
\chi'(\mathbf{S}) \leq \alpha'(\mathbf{S}) \leq \psi'(\mathbf{S}).
\end{equation}

If $\Pi _q$ is an arbitrary (not necessarily desarguesian) finite projective plane of order $q,$ then 
$$\chi'(\Pi _q) =  \alpha'(\Pi _q) = \psi'(\Pi _q)=q^2+q+1,$$
because any two lines of $\Pi _q$ have a point in common. The situation is much more complicated in higher dimensional
projective spaces, the exact values of the chromatic indices are not known for $n\geq 3.$ 
The aim of this paper is to study the achromatic and pseudoachromatic indices of finite projective spaces. 
Our main results are summarized in the following theorem.



\begin{theorem}\label{thm:uno}
Let $v=\frac{q^{n+1}-1}{q-1}$ denote the number of points of $\mathrm{PG}(n,q)$. 
\begin{enumerate}
\item
If $n=3\cdot 2^{i}-1$ $(i=1,2,\dots )$ then 
\begin{equation}
\label{main1}
c_n\frac{1}{q}v^{\frac{4}{3}+\frac{1}{3n}} < \alpha'(\mathrm{PG}(n,q)),
\end{equation} 
where $\frac{1}{2^{\frac{7}{5}}}\leq c_n< \frac{1}{2^{\frac{4}{3}}}$ is a 
constant that depends only on $n$.
\item
If $n\geq 2$ is an arbitrary integer then $$\psi'(\mathrm{PG}(n,q))<\frac{1}{q}v^{\frac{3}{2}}.$$
\end{enumerate}
\end{theorem}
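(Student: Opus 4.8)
The two assertions are of opposite character: the upper bound in part (2) is a pure double-counting estimate, whereas the lower bound in part (1) requires an explicit construction, so I would treat them separately and expect the construction to be the real work.

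For part (2), write $k=\psi'(\mathrm{PG}(n,q))$ and fix a complete coloring of the lines with $k$ colors. The one structural fact I need is that through every point $P$ of $\mathrm{PG}(n,q)$ there pass exactly $r:=\frac{q^n-1}{q-1}$ lines; equivalently, since the $v-1$ points distinct from $P$ are partitioned into the $q$-point punctured lines through $P$, one has the clean identity $r=\frac{v-1}{q}$, i.e. $v=qr+1$. Now I double count incidences of color-pairs with points. Completeness forces each of the $\binom{k}{2}$ unordered pairs of colors to be realized at some common point; at a single point only the $r$ lines through it are available, so at most $\binom{r}{2}$ color-pairs can be realized there. Summing over the $v$ points gives
\[
\binom{k}{2}\le v\binom{r}{2}.
\]
Substituting $r=(v-1)/q$ turns the right-hand side into $\frac{v(v-1)(v-1-q)}{2q^2}$, and the claim $k\le\frac1q v^{3/2}$ reduces, after setting $M=v^{3/2}/q$ and using that $x\mapsto x(x-1)$ is increasing for $x\ge\frac12$, to the elementary inequality $v(v-1)(v-1-q)\le v^3-qv^{3/2}$, i.e. $q\sqrt v\le(2+q)v-(1+q)$. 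This holds comfortably for all $q\ge2$ and all $v\ge q^2+q+1$ (the range of interest, $n\ge2$), which finishes part (2). The only thing to watch is that this last verification is carried out over the exact admissible range rather than merely asymptotically, so that the stated constant $1/q$ is honestly attained rather than hidden in an order-of-magnitude estimate.

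For part (1), the restriction $n+1=3\cdot2^{i}$ and the appearance of powers of $2$ in $c_n$ signal a recursive construction in which the ambient dimension roughly doubles at each step. I would aim to build a proper, complete coloring of $\mathrm{PG}(n,q)$ by partitioning its lines into partial line-spreads (each color class being a set of pairwise disjoint lines, as forced by properness) organized along a tower of nested subspaces or subgeometries, and then bootstrap a coloring of the lower-dimensional space into one of the doubled space while controlling how many new colors are created and verifying that every pair of colors still meets. The exponent $\frac43+\frac{1}{3n}$ and the constant bounds on $c_n$ should then emerge from tracking the color count through the recursion. The main obstacle is exactly here: ensuring that completeness (every pair of colors meets at a point) is preserved under the recursive step while the color classes remain properly disjoint, since these two demands pull against each other, and making the bookkeeping yield precisely the constant $c_n$ with $2^{-7/5}\le c_n<2^{-4/3}$.
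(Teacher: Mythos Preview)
Your treatment of part~(2) is correct and is exactly the paper's argument: the same double count $\binom{k}{2}\le v\binom{r}{2}$ with $r=(v-1)/q$, followed by a short algebraic estimate to reach $k\le v^{3/2}/q$. The paper phrases the last step slightly differently (solving the quadratic and using $\sqrt{1+4vr(r-1)}\le 2r\sqrt{v}-1$ to obtain the marginally sharper $k\le r\sqrt{v}=\sqrt{v}(v-1)/q$), but the content is the same.

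Part~(1), however, is not proved: you describe only a hoped-for recursive scheme and explicitly flag the completeness step as an unresolved obstacle. Moreover, the shape you guess (``bootstrap a coloring from dimension $n$ to the doubled dimension'') is not what actually works. The key structural point is the factorisation $n+1=3(t+1)$ with $t=2^{i}-1$: a geometric $t$-spread $\mathscr{S}^t$ of $\mathrm{PG}(n,q)$ then exists, and its large points and large lines form a projective \emph{plane} $\Pi_{\mathscr{S}^t}\cong\mathrm{PG}(2,q^{t+1})$. Because any two lines of a plane meet, completeness comes almost for free once each large line $\mathfrak{L}_j$ (a $(2t+1)$-subspace) is made the owner of its own block of colors. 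The coloring itself is not recursive: one shows that every $\mathfrak{L}_j$ admits a line packing $\mathscr{P}_j$ in which the lines lying in elements of $\mathscr{S}^t$ form a union of spreads; the remaining $q^{t}\genfrac[]{0pt}{1}{t+1}{1}_q$ spreads of $\mathscr{P}_j$ are assigned colors private to $\mathfrak{L}_j$, while the lines inside each element $X\in\mathscr{S}^t$ are colored by a single global packing of the $t$-space $X$ (here $t+1=2^{i}$ and Beutelspacher's packing theorem applies). The induction on $i$ is used only to establish this special packing property of the $\mathfrak{L}_j$, not to build the coloring. Counting gives $(q^{n+t+1}-1)/(q-1)$ colors, and the constant $c_n=2^{-h}$ with $h=(4n+1)/(3n)$ drops out of $n+t+1=hn$ and the trivial bound $v<2q^{n}$. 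Your outline misses both the projective-plane quotient (which is what resolves the completeness obstacle you identified) and the precise packing lemma that makes the color classes genuine spreads.
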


In Section \ref{ProjectiveSpaces}, we collect some known properties of projective spaces, spreads and packings, 
and we prove a lemma about the existence of a particular spread. 
In Section \ref{section3}, we prove the main theorems about the achromatic and pseudoachromatic indices. 
Finally, in Section \ref{new} (an Appendix is attached, where) we consider the smallest projective space, $\mathrm{PG}(3,2),$ and determine the exact value of its pseudoachromatic index without using a computer.


\section{On projective spaces}\label{ProjectiveSpaces}

It is well-known that, for any $n>2$, the $n$-dimensional finite projective space of order $q$ exists if and only if $q$ is a prime power and it is unique up to isomorphism. Let $V_{n+1}$ be an $(n+1)$-dimensional vector space over the Galois field $\mathrm{GF}(q)$ with $q$ elements. The \emph{$n$-dimensional finite projective space}, denoted by $\mathrm{PG}(n,q),$ is the geometry whose $k$-dimensional subspaces for $k=0,1,\ldots ,n$ are the $(k+1)$-dimensional subspaces of  $V_{n+1}.$ For the detailed description of these spaces we refer the reader to \cite{MR1612570}.

The basic combinatorial properties of $\mathrm{PG}(n,q)$ can be described 
by the $q$-nomial coefficients $\genfrac[]{0pt}{2}{n}{k} _q.$ This number is defined as
$$ \genfrac[]{0pt}{0}{n}{k} _q := 
\frac{(q^n-1)(q^n-q)\ldots (q^n-q^{k-1})}{(q^k-1)(q^k-q)\ldots (q^k-q^{k-1})},$$
and it equals to the
number of $k$-dimensional subspaces in an $n$-dimensional vector space over 
$\mathrm{GF}(q).$ 
The proof of the following proposition is straightforward. 
\begin{proposition}
\label{combprop}
The following holds in $\mathrm{PG}(n,q)$:
\begin{itemize}
\item
the number of $k$-dimensional subspaces is 
$\genfrac[]{0pt}{1}{n+1}{k+1} _q,$
in particular, the number 
of points equals to $\frac{q^{n+1}-1}{q-1}$
and the number of lines equals to 
$\frac{(q^{n+1}-1)(q^{n}-1)}{(q^{2}-1)(q-1)};$
\item
the number of $k$-dimensional subspaces through a given 
$m$-dimensional $(m\leq k)$ subspace is 
$\genfrac[]{0pt}{1}{n-m}{k-m} _q.$

\end{itemize}
\end{proposition}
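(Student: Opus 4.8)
The plan is to reduce both statements to counting linear subspaces of the underlying vector space $V_{n+1}$ and then to invoke the defining formula for the $q$-nomial coefficients. Recall two facts already available: by the definition of $\mathrm{PG}(n,q)$, its $k$-dimensional subspaces are exactly the $(k+1)$-dimensional linear subspaces of $V_{n+1}$; and, as noted just before the statement, $\genfrac[]{0pt}{1}{n}{k}_q$ counts the $k$-dimensional linear subspaces of an $n$-dimensional vector space over $\mathrm{GF}(q)$. Everything below is bookkeeping built on these two facts.

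For the first item, I would observe that projective $k$-subspaces correspond bijectively to $(k+1)$-dimensional vector subspaces of the $(n+1)$-dimensional space $V_{n+1}$, so their number is precisely $\genfrac[]{0pt}{1}{n+1}{k+1}_q$. Specializing $k=0$ gives the point count $\genfrac[]{0pt}{1}{n+1}{1}_q = \frac{q^{n+1}-1}{q-1}$, and specializing $k=1$ gives the line count $\genfrac[]{0pt}{1}{n+1}{2}_q$; a short simplification, factoring out the common $q$ from the second factor of numerator and denominator, rewrites this as $\frac{(q^{n+1}-1)(q^{n}-1)}{(q^{2}-1)(q-1)}$, as claimed.

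For the second item, fix an $m$-dimensional projective subspace, which corresponds to an $(m+1)$-dimensional vector subspace $W \leq V_{n+1}$. The key step is the standard quotient argument: the $(k+1)$-dimensional vector subspaces of $V_{n+1}$ containing $W$ are in dimension-preserving bijection (after subtracting the dimension of $W$) with the $\bigl((k+1)-(m+1)\bigr)=(k-m)$-dimensional subspaces of the quotient space $V_{n+1}/W$, whose dimension is $(n+1)-(m+1)=n-m$. Hence their number equals the number of $(k-m)$-dimensional subspaces of an $(n-m)$-dimensional space over $\mathrm{GF}(q)$, namely $\genfrac[]{0pt}{1}{n-m}{k-m}_q$.

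The computations here are genuinely routine, and the only places that warrant any care are the constant dimension shift by one between projective and vector dimensions throughout, and, in the second item, verifying that the correspondence $U \mapsto U/W$ between subspaces of $V_{n+1}$ containing $W$ and subspaces of $V_{n+1}/W$ is a bijection that lowers dimension by exactly $m+1$. Neither of these is a real obstacle, which is precisely why the statement can be asserted to be straightforward.
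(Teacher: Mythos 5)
Your proof is correct; the paper itself gives no argument here (it simply declares the proposition straightforward), and your reduction to counting vector subspaces of $V_{n+1}$ via the $q$-nomial coefficients, together with the quotient-space bijection for the second item, is exactly the standard argument the authors have in mind. Both the dimension-shift bookkeeping and the simplification of $\genfrac[]{0pt}{1}{n+1}{2}_q$ to $\frac{(q^{n+1}-1)(q^n-1)}{(q^2-1)(q-1)}$ check out.
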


A \emph{$t$-spread} $\mathscr{S}^t$ of $\mathrm{PG}(n,q)$ is a set of $t$-dimensional subspaces (for short $t$-subspaces) of $\mathrm{PG}(n,q)$ that partitions $\mathrm{PG}(n,q)$. That is, each point of $\mathrm{PG}(n,q)$ lies in exactly one element of $\mathscr{S}^t$. Hence any two elements of $\mathscr{S}^t$ are disjoint. A $1$-spread is also called \emph{line spread} and it is denoted by $\mathscr{S}$. It is well-known that a \emph{$t$-spread} of $\mathrm{PG}(n,q)$ exists if and only if $(t+1)|(n+1),$ hence line spreads exist in projective spaces of odd dimension.

A \emph{$t$-packing} $\mathscr{P}^t$ of $\mathrm{PG}(n,q)$ is a 
partition of the $t$-spaces of $\mathrm{PG}(n,q)$ into $t$-spreads. 
A $1$-packing is also called \emph{line packing} or \emph{parallelism} and 
it is denoted by $\mathscr{P}.$ The next result is an 
obvious corollary of Proposition \ref{combprop}.

\begin{proposition}
\label{combprop-2}
$ $
\begin{itemize}
\item
A $t$-spread in $\mathrm{PG}(n,q)$ consists of $\frac{q^{n+1}-1}{q^{t+1}-1}$
$t$-subspaces.
\item
A $t$-packing in $\mathrm{PG}(n,q)$ consists of $\genfrac[]{0pt}{1}{n}{t} _q$
$t$-spreads.

\end{itemize}
\end{proposition}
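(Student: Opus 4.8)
The plan is to establish both assertions by elementary double counting, which is exactly why the paper can call this an obvious corollary of Proposition~\ref{combprop}. For the first statement I would count the points of $\mathrm{PG}(n,q)$ in two ways. By definition a $t$-spread $\mathscr{S}^t$ partitions the point set into pairwise disjoint $t$-subspaces, each of which is a copy of $\mathrm{PG}(t,q)$ and therefore contains $\frac{q^{t+1}-1}{q-1}$ points by Proposition~\ref{combprop}. Since the total number of points of $\mathrm{PG}(n,q)$ equals $\frac{q^{n+1}-1}{q-1}$, dividing the latter by the former gives
$$
\frac{(q^{n+1}-1)/(q-1)}{(q^{t+1}-1)/(q-1)} = \frac{q^{n+1}-1}{q^{t+1}-1}
$$
for the number of elements of $\mathscr{S}^t$, which is precisely the claimed count.

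For the second statement I would run the same argument one dimension higher, now counting $t$-subspaces rather than points. A $t$-packing $\mathscr{P}^t$ partitions the set of all $t$-subspaces of $\mathrm{PG}(n,q)$ into $t$-spreads, and by Proposition~\ref{combprop} the total number of $t$-subspaces is $\genfrac[]{0pt}{1}{n+1}{t+1} _q$. Each spread in the packing contributes $\frac{q^{n+1}-1}{q^{t+1}-1}$ of these $t$-subspaces by the first part, so the number of spreads in $\mathscr{P}^t$ equals
$$
\genfrac[]{0pt}{1}{n+1}{t+1} _q \cdot \frac{q^{t+1}-1}{q^{n+1}-1}.
$$
It then remains to simplify this expression. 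Writing the $q$-nomial coefficient in its factored form $\genfrac[]{0pt}{1}{n+1}{t+1} _q = \frac{(q^{n+1}-1)(q^n-1)\cdots(q^{n+1-t}-1)}{(q^{t+1}-1)(q^t-1)\cdots(q-1)}$, the factor $q^{n+1}-1$ cancels against the denominator above while $q^{t+1}-1$ cancels against the numerator, leaving
$$
\frac{(q^n-1)(q^{n-1}-1)\cdots(q^{n-t+1}-1)}{(q^t-1)(q^{t-1}-1)\cdots(q-1)} = \genfrac[]{0pt}{1}{n}{t} _q,
$$
which is the desired value.

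There is no genuine obstacle here: both parts reduce to dividing one counting formula by another. The only step demanding a little care is the algebraic reduction in the second part, where one must recognise that cancelling the two outermost factors turns $\genfrac[]{0pt}{1}{n+1}{t+1} _q$ into $\genfrac[]{0pt}{1}{n}{t} _q$. This is simply the $q$-analogue of the identity $\binom{n+1}{t+1}\cdot\frac{t+1}{n+1} = \binom{n}{t}$, with the factor $\frac{q^{t+1}-1}{q^{n+1}-1}$ playing the role of $\frac{t+1}{n+1}$; the factored form of the $q$-nomial coefficient makes the cancellation transparent.
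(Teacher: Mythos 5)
Your proposal is correct and is exactly the double-counting argument the paper has in mind when it calls this result an obvious corollary of Proposition~\ref{combprop}: the first bullet follows by dividing the point count of $\mathrm{PG}(n,q)$ by that of a $t$-subspace, and the second by dividing the total number of $t$-subspaces by the size of a spread, with the $q$-nomial cancellation carried out correctly. (As a minor remark, the second bullet can also be obtained with no algebra at all by fixing a point $P$ and noting that each spread of the packing contains exactly one $t$-subspace through $P$, so the number of spreads equals $\genfrac[]{0pt}{1}{n}{t}_q$ by the second item of Proposition~\ref{combprop} with $m=0$.)
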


A necessary and sufficient condition for the existence of a
\emph{$t$-packing} of $\mathrm{PG}(n,q)$ is not known in general.
The following theorems give specific constructions in some particular cases.

\begin{theorem}[Beutelspacher \cite{MR0341270}]
\label{Beutelspacher2}
If $n=2^{i}-1$ with $i=1,2,\dots ,$ then for every prime power $q$ the 
finite projective space $\mathrm{PG}(n,q)$ admits a line packing.
\end{theorem}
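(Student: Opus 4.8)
The plan is to argue by induction on $i$, lowering the dimension through a quadratic field extension. The base case $i=1$ is trivial, since $\mathrm{PG}(1,q)$ is a single line and the partition into that one line is a line packing. The observation driving the induction is the field reduction $\mathrm{GF}(q^2)\supset\mathrm{GF}(q)$: the $2^{i-1}$-dimensional vector space over $\mathrm{GF}(q^2)$ underlying $\widehat\Sigma:=\mathrm{PG}(2^{i-1}-1,q^2)$ is a $2^i$-dimensional vector space over $\mathrm{GF}(q)$, so it also coordinatizes $\Sigma:=\mathrm{PG}(2^i-1,q)$. Under this reduction each point of $\widehat\Sigma$ becomes a line of $\Sigma$, and these lines form a line spread $\mathscr{S}_0$ of $\Sigma$ (the regular, or Desarguesian, spread); moreover each line of $\widehat\Sigma$ becomes a solid $\mathrm{PG}(3,q)\subset\Sigma$ that is partitioned by the $\mathscr{S}_0$-lines inside it into a regular spread of that solid.

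First I would set up this dictionary between the two spaces and record the key fact that every line of $\Sigma$ not belonging to $\mathscr{S}_0$ is contained in exactly one such solid, namely its $\mathrm{GF}(q^2)$-span. I would then build the packing of $\Sigma$ as follows. Declare $\mathscr{S}_0$ to be the first parallel class. By the induction hypothesis, applied over the prime power $q^2$, the space $\widehat\Sigma$ admits a line packing; each of its parallel classes is a line spread of $\widehat\Sigma$ that pulls back to a partition of $\Sigma$ into solids, each regularly spread by $\mathscr{S}_0$. Inside every such solid I would take a parallelism of $\mathrm{PG}(3,q)$ containing its regular spread as one class; the remaining $q^2+q$ classes then partition the non-regular lines of the solid. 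Collecting, over all solids arising from a fixed parallel class of $\widehat\Sigma$, these non-regular classes produces $q^2+q$ line spreads of $\Sigma$, since the solids partition the points of $\Sigma$.

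A direct count confirms that the classes so constructed exhaust all lines. Each line of $\widehat\Sigma$, hence each solid, lies in a unique parallel class of $\widehat\Sigma$, so every non-$\mathscr{S}_0$ line of $\Sigma$ is covered exactly once, while $\mathscr{S}_0$ covers the rest. The number of classes obtained is
\[
1+\genfrac[]{0pt}{1}{2^{i-1}-1}{1}_{q^2}\,(q^2+q)=\frac{q^{2^i-1}-1}{q-1}=\genfrac[]{0pt}{1}{2^i-1}{1}_q,
\]
which is exactly the number of spreads required in a line packing of $\mathrm{PG}(2^i-1,q)$ by Proposition \ref{combprop-2}, so the collection is a parallelism.

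The main obstacle is the solid-level ingredient: the existence, for every prime power $q$, of a parallelism of $\mathrm{PG}(3,q)$ containing a prescribed regular spread. This case ($i=2$) cannot be reached by the reduction above, since there the pulled-back solid is the whole space, so it must be treated directly and constitutes the genuine heart of the argument; a natural route is to exhibit a cyclic (Singer-type) collineation of $\mathrm{PG}(3,q)$ that fixes a regular spread and generates the complementary parallel classes from a single orbit. Once this base fact and the elementary field-reduction dictionary are in place, the induction closes and yields the desired line packing of $\mathrm{PG}(2^i-1,q)$.
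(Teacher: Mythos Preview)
The paper does not give its own proof of this theorem: it is quoted without proof as a result of Beutelspacher \cite{MR0341270}, so there is no in-paper argument to compare against. What can be said is that your sketch faithfully reconstructs Beutelspacher's original inductive proof: field reduction from $\mathrm{GF}(q^2)$ to $\mathrm{GF}(q)$ turns points of $\mathrm{PG}(2^{i-1}-1,q^2)$ into the lines of a geometric spread $\mathscr{S}_0$ of $\mathrm{PG}(2^i-1,q)$ and lines into solids regularly spread by $\mathscr{S}_0$; a parallelism upstairs (inductive hypothesis over $q^2$) together with a parallelism of each solid extending its regular spread then assembles into a parallelism downstairs. Your count of classes is correct, and you correctly isolate the only nontrivial ingredient as the $i=2$ base case, which is precisely the content of Theorem~\ref{Beutelspacher} in the paper (also cited from \cite{MR0341270}). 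Note, incidentally, that the paper's own Lemma~\ref{Packing} carries out essentially the same field-reduction induction in the closely related setting $n=3\cdot 2^i-1$, so your approach is fully in the spirit of the surrounding material.
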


\begin{theorem}[Baker \cite{MR0416937}]
For all integers $m>0$ 
the finite projective space $\mathrm{PG}(2m+1,2)$ admits a line packing. 
\end{theorem}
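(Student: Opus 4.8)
The plan is to realize $\mathrm{PG}(2m+1,2)$ as a Steiner triple system and to resolve it using a Singer cycle. Write $F=\mathrm{GF}(2^{2m+2})$ and identify the points of $\mathrm{PG}(2m+1,2)$ with the nonzero elements $F^{*}$, since over $\mathrm{GF}(2)$ each projective point is a single nonzero vector. A line, being a $2$-dimensional $\mathrm{GF}(2)$-subspace with the origin removed, is then a triple $\{x,y,z\}\subseteq F^{*}$ with $x+y+z=0$. In this language a line spread is a partition of $F^{*}$ into such sum-zero triples, and a line packing is a resolution of \emph{all} sum-zero triples into $2^{2m+1}-1$ disjoint spreads, which is exactly the number of spreads predicted by Proposition~\ref{combprop-2}. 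So the statement reduces to proving that this Steiner triple system is resolvable.

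First I would set up the group action. Let $g$ generate the cyclic group $F^{*}$ of order $N=2^{2m+2}-1$. Multiplication by $g$ fixes the origin and sends sum-zero triples to sum-zero triples, so $C=\langle g\rangle$ acts regularly on the points as a group of collineations (the Singer cycle). Since $3\mid N$ and $\mathrm{GF}(4)\subseteq F$, the cubic subgroup $H=\langle g^{N/3}\rangle=\langle\alpha\rangle$ has order $3$, where $\alpha=g^{N/3}$ satisfies $1+\alpha+\alpha^{2}=0$; each coset $xH=\{x,x\alpha,x\alpha^{2}\}$ is then a line, and the set of these cosets is the regular (Desarguesian) line spread $\mathscr{S}_{0}$, which I take as the first parallel class. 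A short computation shows that $\mathscr{S}_{0}$ is the unique $C$-orbit of lines of length $N/3$ and that every other line has trivial stabilizer, so the remaining lines fall into orbits of full length $N$. The arithmetic is encouraging: there are $(N-3)/6$ such long orbits, and since $N=3\cdot(N/3)$, if each long orbit could be cut into three spreads we would obtain $3(N-3)/6+1=(N-1)/2=2^{2m+1}-1$ parallel classes in total, exactly the required number.

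Next I would give the splitting criterion. Let $\langle g^{3}\rangle$ be the index-$3$ subgroup and $\rho\colon F^{*}\to\mathbb{Z}/3$ the associated cubic character. If a base line $\ell_{0}$ of a long orbit has its three points in three distinct cosets of $\langle g^{3}\rangle$, then $\{g^{3i}\ell_{0}\}_{i}$ covers each point of $F^{*}$ exactly once and is therefore a spread; applying $g$ and $g^{2}$ produces two further spreads, and the three together exhaust the orbit. Because the image of a spread under a collineation is again a spread, no further checking is needed once one class per orbit is verified.

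The hard part is precisely the hypothesis of this criterion. A sum-zero triple can perfectly well have two of its points in the same cubic coset --- for instance $\{1,y,1+y\}$ with $y\in\langle g^{3}\rangle\setminus\{1\}$ and $\rho(1+y)\neq 0$ --- and such a line generates a long orbit that the uniform rule $\langle g^{3}\rangle$ fails to resolve. Thus the genuine obstacle is controlling these ``bad'' orbits and showing that the full line set, with $\mathscr{S}_{0}$ removed, is nonetheless resolvable, either by assigning a bespoke triple of spreads to each orbit or by mixing lines of different orbits into common parallel classes. This is the combinatorial heart of Baker's argument; one natural route is an induction on $m$ anchored by the packings of $\mathrm{PG}(2^{i}-1,2)$ supplied by Theorem~\ref{Beutelspacher2}, but carrying resolvability across all odd dimensions is exactly where the real work lies.
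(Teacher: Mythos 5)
Your setup is sound as far as it goes: identifying the points of $\mathrm{PG}(2m+1,2)$ with $\mathrm{GF}(2^{2m+2})^{*}$, lines with sum-zero triples, the coset spread $\mathscr{S}_{0}$ under the order-$3$ subgroup, the orbit analysis under the Singer cycle (one short orbit, namely $\mathscr{S}_{0}$ itself, and all other orbits of full length $N=2^{2m+2}-1$), and the splitting criterion for an orbit whose base line meets three distinct cosets of $\langle g^{3}\rangle$ are all correct, and the count $3(N-3)/6+1=2^{2m+1}-1$ does match Proposition~\ref{combprop-2}. But this is scaffolding, not a proof. As you yourself concede, there exist ``bad'' orbits --- e.g.\ the orbit of $\{1,y,1+y\}$ with $y\in\langle g^{3}\rangle\setminus\{1\}$ --- on which the criterion fails, and the entire content of Baker's theorem is precisely the resolution of those orbits (or, equivalently, a global scheme that mixes lines across orbits into common parallel classes). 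You have reduced the theorem to its hard part and then stopped; the arithmetic consistency check on the number of parallel classes is necessary but carries no force toward existence.

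The fallback you sketch cannot be repaired as stated, either. Theorem~\ref{Beutelspacher2} supplies packings only in dimensions $2^{i}-1$, so it gives no anchor for dimensions $5$, $9$, $11$, $13,\dots$, and you propose no mechanism for passing from a packing of $\mathrm{PG}(2m-1,2)$ to one of $\mathrm{PG}(2m+1,2)$; no such straightforward induction is known, which is why Baker's actual argument (which works with partitions of the planes of $\mathrm{AG}(2m,2)$ into $2$-designs and transfers them to line packings of $\mathrm{PG}(2m+1,2)$) is a substantial piece of combinatorics rather than a corollary of the $2^{i}-1$ case. Note also that the paper itself does not prove this statement --- it is quoted from Baker with a citation --- so the standard your write-up must meet is that of a self-contained proof, and it does not meet it.
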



A \emph{regulus} of $\mathrm{PG}(3,q)$ is a set $\mathscr{R}$ of $q+1$ mutually skew 
lines such that any line of $\mathrm{PG}(3,q)$ intersecting three distinct elements of 
$\mathscr{R}$ intersects all elements of $\mathscr{R}$. It is known \cite{MR0250182}  
that any three pairwise skew lines $\ell_1,\ell_2,\ell_3$ of $\mathrm{PG}(3,q)$ are 
contained in exactly one regulus $\mathscr{R}=\mathscr{R}(\ell_1,\ell_2,\ell_3)$ of $\mathrm{PG}(3,q)$. 
A line spread $\mathscr{S}$ of $\mathrm{PG}(3,q)$ is called \emph{regular}, if for any 
three distinct lines of $\mathscr{S}$ the whole regulus $\mathscr{R}=\mathscr{R}(l_1,l_2,l_3)$ 
is contained in $\mathscr{S}$.

\begin{theorem}[Beutelspacher \cite{MR0341270}] 
\label{Beutelspacher}
For any regular spread $\mathscr{S}$ of $\mathrm{PG}(3,q)$ there is a 
packing $\mathscr{P}$ of $\mathrm{PG}(3,q)$ which contains $\mathscr{S}$ 
as one of its spreads.
\end{theorem}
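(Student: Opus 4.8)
The plan is to realize the packing as the orbit of the given regular spread $\mathscr{S}$ under a suitable cyclic group of collineations. Concretely, I would look for a subgroup $G\leq \mathrm{PGL}(4,q)$ of order $q^{2}+q+1$ that acts \emph{semiregularly} (freely) on the set of all lines of $\mathrm{PG}(3,q)$ and for which $\mathscr{S}$ is a transversal to the $G$-orbits, i.e.\ $g(\mathscr{S})\cap\mathscr{S}=\emptyset$ for every $g\neq \mathrm{id}$. Granting this, the family $\{g(\mathscr{S}):g\in G\}$ is the desired packing: the image of a spread under a collineation is again a spread, there are $|G|=q^{2}+q+1=\genfrac[]{0pt}{1}{3}{1} _q$ of them (the correct number by Proposition \ref{combprop-2}), and freeness together with transversality forces these $q^{2}+q+1$ spreads to partition the $(q^{2}+1)(q^{2}+q+1)$ lines, with $\mathscr{S}$ itself occurring as the member $g=\mathrm{id}$. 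Moreover, since any two regular spreads are equivalent under $\mathrm{PGL}(4,q)$, it is enough to carry this out for one convenient regular spread and then conjugate both the spread and the group.

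For the group I would use the decomposition $V_{4}=V_{3}\oplus V_{1}$ with $V_{3}$ modelled on $\mathrm{GF}(q^{3})$. Choosing $\zeta\in\mathrm{GF}(q^{3})$ of multiplicative order $q^{2}+q+1$ (a generator of the norm-one subgroup, whose minimal polynomial over $\mathrm{GF}(q)$ is an irreducible cubic), let $C$ act on $V_{3}$ as multiplication by $\zeta$ and trivially on $V_{1}$. A direct check shows that $C$ has projective order exactly $q^{2}+q+1$, that it fixes the point $P=\langle V_{1}\rangle$ and the plane $\pi=\langle V_{3}\rangle$, and that its restriction to $\pi\cong\mathrm{PG}(2,q)$ is a Singer cycle acting regularly on the points and on the lines of $\pi$. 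Freeness on lines then follows almost formally: every line of $\mathrm{PG}(3,q)$ meets the plane $\pi$, so a nontrivial power of $C$ fixing a line would fix either a point or a line of $\pi$, which is impossible for a Singer cycle. Hence $G=\langle C\rangle$ is free on lines and has precisely $q^{2}+1$ line-orbits, matching $|\mathscr{S}|$.

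The main obstacle is the transversality, namely that $\mathscr{S}$ can be chosen to meet each $G$-orbit exactly once; this is precisely the step where the \emph{regularity} of $\mathscr{S}$ must be used, and it is the technical heart of the argument. Part of it is automatic: the unique line of $\mathscr{S}$ through the fixed point $P$ lies in the orbit of lines through $P$, and since a spread contains only one line through each point, no other translate of that line can belong to $\mathscr{S}$. For the remaining orbits I would pass to the Klein correspondence, under which the lines of $\mathrm{PG}(3,q)$ become the points of the Klein quadric $Q^{+}(5,q)$, a regular spread becomes an elliptic section $Q^{-}(3,q)=Q^{+}(5,q)\cap\Sigma_{0}$, and $C$ becomes an element of the orthogonal group. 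In this language a packing through $\mathscr{S}$ is a partition of $Q^{+}(5,q)$ into $q^{2}+q+1$ elliptic sections one of which is $Q^{-}(3,q)$, and the required disjointness $g(\mathscr{S})\cap\mathscr{S}=\emptyset$ becomes the statement that the solids $\Sigma_{i}=C^{i}(\Sigma_{0})$ cut the quadric in pairwise disjoint ovoids. Establishing this — equivalently, that no nontrivial power of $C$ maps a line of $\mathscr{S}$ to another line of $\mathscr{S}$ — is where I expect the real work to lie, and where the interaction between the chosen field element $\zeta$ and the $\mathrm{GF}(q^{2})$-structure defining $\mathscr{S}$ has to be pinned down.
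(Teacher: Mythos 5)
A preliminary remark on the comparison: the paper does not prove this statement at all --- it is quoted as a known theorem from Beutelspacher \cite{MR0341270} and used as a black box inside the proof of Lemma \ref{Packing} --- so your proposal has to be judged on its own merits, and it has two genuine problems. The first is that the step you explicitly defer (``the technical heart'', that no nontrivial power of $C$ maps a line of $\mathscr{S}$ to another line of $\mathscr{S}$) \emph{is} the theorem. Everything you actually establish --- images of spreads are spreads, the orbit count $q^{2}+q+1=\genfrac[]{0pt}{1}{3}{1}_q$, ``freeness plus transversality implies packing'', the reduction to one regular spread via $\mathrm{PGL}(4,q)$-equivalence --- is routine bookkeeping; the disjointness of the translates $g(\mathscr{S})$ is where all the difficulty of Beutelspacher's (really, Denniston's) construction lives, and a proof that stops exactly there proves nothing.

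The second problem is worse: your construction provably cannot be completed when $q\equiv 1\pmod 3$ (so for $q=4,7,13,16,\dots$). In that case $3$ divides both $q-1$ and $q^{2}+q+1$, so the norm-one subgroup $\left\langle \zeta\right\rangle$ of $\mathrm{GF}(q^{3})^{*}$ contains the cube roots of unity of $\mathrm{GF}(q)$. If $\zeta^{k}=\omega$ is such a primitive cube root, then $C^{k}$ acts as the scalar $\omega$ on $V_{3}$ and trivially on $V_{1}$: projectively it is a nontrivial \emph{homology} with centre $P$ and axis $\pi$. It fixes every point of $\pi$, hence every line of $\pi$ and every line through $P$; consequently the group induced on $\pi$ has order $(q^{2}+q+1)/3$ and is not a Singer cycle, your freeness argument collapses, and the orbit count is wrong. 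This is not an artifact of your particular $\zeta$: every element of $\mathrm{PGL}(4,q)$ of order $q^{2}+q+1$ lies in (the projective image of) a torus of type $\mathrm{GF}(q^{3})^{*}\times \mathrm{GF}(q)^{*}$, and when $3\mid q-1$ the unique subgroup of order $q^{2}+q+1$ there always contains such a homology. Nor can a clever choice of spread save you: any spread of $\mathrm{PG}(3,q)$ contains \emph{exactly one} line $m$ lying in $\pi$ (two lines in a plane meet, and if no spread line lay in $\pi$ the spread would cover only $q^{2}+1<q^{2}+q+1$ points of $\pi$). The homology $h$ fixes $m$, so either $h(\mathscr{S})=\mathscr{S}$ (orbit too small) or $m\in \mathscr{S}\cap h(\mathscr{S})$ (translates not disjoint); either way the orbit of $\mathscr{S}$ under any cyclic group of order $q^{2}+q+1$ is never a packing. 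Put differently, your route would manufacture a packing all of whose spreads are regular and which is a single orbit of a cyclic group transitive on its spreads; such cyclic packings are known for $q\not\equiv 1\pmod 3$ (Denniston's examples, and the Penttila--Williams regular parallelisms), but the argument above shows they do not exist for $q\equiv 1\pmod 3$. Beutelspacher's theorem is weaker in exactly the right way --- only the one distinguished spread is required to be regular, and the remaining spreads of his packing are not images of it under a group --- which is why it holds for every prime power $q$ while your construction cannot.
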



There is an important class of spreads. The notion of geometric spread
was introduced by Segre \cite{MR0169117} in the following way. Let $\left\langle X,Y\right\rangle $ be 
the subspace of $\mathrm{PG}(n,q)$ generated by $X$ and $Y$, where $X$ and $Y$ are  
two different elements of a $t$-spread $\mathscr{S}^t$ of 
$\mathrm{PG}(n,q).$ As $X$ and $Y$ are disjoint, from the dimension formula we get that 
$\left\langle X,Y\right\rangle $ is a $(2t+1)$-subspace. We say that 
$\mathscr{S}^t$ \emph{induces a spread} 
$\mathscr{S}^t_{\left\langle X,Y\right\rangle}$ in $\left\langle X,Y\right\rangle $, 
if any element $Z$ of $\mathscr{S}^t$ having at least one point in $\left\langle X,Y\right\rangle $ 
is totally contained in $\left\langle X,Y\right\rangle $. 
The $t$-spread
$\mathscr{S}^t$ is called \emph{geometric} if $\mathscr{S}^t$ induces a 
spread $\mathscr{S}^t_{\left\langle X,Y\right\rangle}$ in $\left\langle X,Y\right\rangle $ 
for any two distinct elements $X$ and $Y$ of $\mathscr{S}^t$. 

It is not difficult to check (see \cite{MR0341270}, Section 4) that a $t$-spread $\mathscr{S}^t$ of $\mathrm{PG}(n,q)$ 
is geometric if and only if the following holds. If the elements $X$ of 
$\mathscr{S}^t$ are called \emph{large points}, and 
for disjoint elements $X,Y$ of $\mathscr{S}^t$ the subspaces 
$\left\langle X,Y\right\rangle $ 
are called \emph{large lines}, then the large points and large lines form a projective space.
This space, $\Pi_{\mathscr{S}^t},$ has dimension $s=\frac{n+1}{t+1}-1$ and order $q^{t+1}$, it is
isomorphic to $\mathrm{PG}\left( \frac{n+1}{t+1}-1,q^{t+1} \right) .$ 

The following two results are due to Segre \cite{MR0169117}.

\begin{theorem} 
\label{segre}
The finite projective space $\mathrm{PG}(n,q)$ admits a geometric $t$-spread 
if and only if there exists a positive integer $s$ such that $n+1=(t+1)(s+1)$ holds.
\end{theorem}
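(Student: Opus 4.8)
The plan is to prove the two implications separately: the forward direction is essentially immediate from facts already recorded, while the reverse direction calls for an explicit field-reduction construction followed by a verification that the resulting spread is geometric.

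For the necessity, suppose $\mathrm{PG}(n,q)$ admits a geometric $t$-spread $\mathscr{S}^t$. In particular $\mathscr{S}^t$ is a $t$-spread, so by the existence criterion quoted above we must have $(t+1)\mid(n+1)$; writing the quotient as $s+1$ yields $n+1=(t+1)(s+1)$ with $s$ a nonnegative integer. That $s$ is in fact \emph{positive} reflects the need for at least two distinct large points before one can speak of large lines at all, i.e. $|\mathscr{S}^t|=\frac{q^{n+1}-1}{q^{t+1}-1}\geq 2$, which forces $n+1\geq 2(t+1)$ and hence $s\geq 1$.

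For the sufficiency, assume $n+1=(t+1)(s+1)$ with $s\geq 1$ and build the Desarguesian (field-reduction) spread. Set $\mathbb{F}=\mathrm{GF}(q)$ and $\mathbb{K}=\mathrm{GF}(q^{t+1})$, so $\mathbb{K}$ is a $(t+1)$-dimensional vector space over $\mathbb{F}$. Take $W=\mathbb{K}^{s+1}$, which has $\mathbb{K}$-dimension $s+1$ and $\mathbb{F}$-dimension $(t+1)(s+1)=n+1$, so that $\mathrm{PG}(W_\mathbb{F})=\mathrm{PG}(n,q)$. Each $1$-dimensional $\mathbb{K}$-subspace $\langle v\rangle_\mathbb{K}$ of $W$ is a $(t+1)$-dimensional $\mathbb{F}$-subspace, hence a $t$-subspace of $\mathrm{PG}(n,q)$; let $\mathscr{S}^t$ be the collection of all such subspaces. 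Since every nonzero vector of $W$ lies in exactly one $1$-dimensional $\mathbb{K}$-subspace, the members of $\mathscr{S}^t$ partition the points of $\mathrm{PG}(n,q)$, and a count against Proposition \ref{combprop-2} confirms $|\mathscr{S}^t|=\frac{(q^{t+1})^{s+1}-1}{q^{t+1}-1}=\frac{q^{n+1}-1}{q^{t+1}-1}$, so $\mathscr{S}^t$ is genuinely a $t$-spread.

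The crux is to verify that $\mathscr{S}^t$ is geometric, and the single key observation is that $\mathbb{F}$-spans of spread elements coincide with $\mathbb{K}$-spans of their generators. Concretely, for distinct $X=\langle u\rangle_\mathbb{K}$ and $Y=\langle w\rangle_\mathbb{K}$ (with $u,w$ linearly independent over $\mathbb{K}$) one has $\left\langle X,Y\right\rangle=\mathbb{K}u+\mathbb{K}w=\langle u,w\rangle_\mathbb{K}$, a $2$-dimensional $\mathbb{K}$-subspace and thus a $(2t+1)$-subspace of $\mathrm{PG}(n,q)$, exactly the dimension predicted above. If some spread element $Z=\langle z\rangle_\mathbb{K}$ meets $\left\langle X,Y\right\rangle$ in a point, then a nonzero multiple $\lambda z$ with $\lambda\in\mathbb{K}^{\ast}$ lies in the $\mathbb{K}$-subspace $\langle u,w\rangle_\mathbb{K}$, whence $z\in\langle u,w\rangle_\mathbb{K}$ and the whole of $Z=\mathbb{K}z$ is contained in $\left\langle X,Y\right\rangle$. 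This is precisely the condition that $\mathscr{S}^t$ induce a spread on each large line, so $\mathscr{S}^t$ is geometric, and the large points and large lines reproduce the $\mathbb{K}$-structure of $\mathrm{PG}(W_\mathbb{K})\cong\mathrm{PG}(s,q^{t+1})$. I expect the main obstacle to be bookkeeping, namely keeping the $\mathbb{F}$- and $\mathbb{K}$-dimensions straight, rather than any real difficulty: the geometric property falls out immediately once one observes that $\langle u,w\rangle_\mathbb{K}$ is already closed under $\mathbb{K}$-scaling.
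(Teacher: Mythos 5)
Your proposal is correct, but there is no proof in the paper to compare it against: Theorem \ref{segre} is presented as a known result of Segre and cited to \cite{MR0169117} without proof, so you have supplied an argument where the authors rely on the literature. What you give is the classical field-reduction (Desarguesian spread) construction, which is indeed the standard proof of Segre's theorem. Your necessity direction correctly reduces to the divisibility criterion $(t+1)\mid(n+1)$ recorded in Section \ref{ProjectiveSpaces}, and your sufficiency direction is sound: with $\mathbb{F}=\mathrm{GF}(q)$, $\mathbb{K}=\mathrm{GF}(q^{t+1})$ and $W=\mathbb{K}^{s+1}$, the one-dimensional $\mathbb{K}$-subspaces do partition the points of $\mathrm{PG}(n,q)$, and the key observation --- that $\left\langle X,Y\right\rangle=\mathbb{K}u+\mathbb{K}w$ is itself a $\mathbb{K}$-subspace, hence closed under $\mathbb{K}$-scaling, so any spread element meeting it in a point lies entirely inside it --- is exactly what the definition of a geometric spread requires; it also identifies $\Pi_{\mathscr{S}^t}$ with $\mathrm{PG}(s,q^{t+1})$, matching the paper's description of large points and large lines. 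One minor caveat on the necessity side: in the degenerate case $n=t$ the trivial spread $\{\mathrm{PG}(n,q)\}$ satisfies the geometric condition vacuously (there are no pairs $X\neq Y$ to test), so ruling out $s=0$ is really a matter of convention --- a geometric spread is only meaningful when large lines exist --- rather than a deduction; your remark about needing two large points is a reasonable way to flag this, but you should state it as the convention under which the theorem is read rather than as a consequence of the definition.
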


\begin{lemma} 
\label{regular}
If $\mathrm{PG}(n,q)$ admits a geometric line spread $\mathscr{S}$ then 
$\mathscr{S}_{\left\langle X,Y \right\rangle}$ is a regular line spread of 
the $3$-dimensional subspace $\left\langle X,Y \right\rangle$ of $\mathrm{PG}(n,q)$ 
for any $X,Y\in\mathscr{S}$ ($X\not=Y$). 
\end{lemma}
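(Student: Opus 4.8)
The plan is to prove Lemma~\ref{regular} by reducing the general geometric line spread to the three-dimensional situation and then invoking the regulus definition directly. The key point is that a geometric line spread $\mathscr{S}$ of $\mathrm{PG}(n,q)$ induces, in any large line $\left\langle X,Y\right\rangle$, a line spread $\mathscr{S}_{\left\langle X,Y\right\rangle}$ of that three-dimensional subspace; what must be shown is that this induced spread is moreover \emph{regular} in the sense defined above. So the real content is to verify the regulus-closure condition inside the $3$-space.

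First I would fix two distinct elements $X,Y\in\mathscr{S}$ and set $W=\left\langle X,Y\right\rangle$. By the dimension formula $W$ is a $3$-subspace, and by the definition of geometric spread every element of $\mathscr{S}$ meeting $W$ lies entirely in $W$; hence the elements of $\mathscr{S}$ contained in $W$ form a genuine line spread $\mathscr{S}_W$ of $W\cong\mathrm{PG}(3,q)$. Next I would pick any three distinct lines $\ell_1,\ell_2,\ell_3\in\mathscr{S}_W$ and consider the unique regulus $\mathscr{R}=\mathscr{R}(\ell_1,\ell_2,\ell_3)$ they determine in $W$. The goal is to show $\mathscr{R}\subseteq\mathscr{S}_W$. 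The natural mechanism is the transfer to the quotient projective space $\Pi_{\mathscr{S}}\cong\mathrm{PG}(s,q^2)$ whose large points are the elements of $\mathscr{S}$: three lines of $\mathscr{S}_W$ correspond to three large points lying on a common large line (namely $W$ itself), and a regulus of $W$ should correspond to the full set of large points on that large line, i.e.\ to the induced structure being all of $\mathscr{S}_W$.

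The key step, and the one I expect to carry the weight of the argument, is identifying the regulus $\mathscr{R}(\ell_1,\ell_2,\ell_3)$ with the complete collection of spread elements lying in $W$. Here I would use the fact that a geometric spread partitions $W$ into exactly $q+1$ skew lines (since a line spread of $\mathrm{PG}(3,q)$ has $q^2+1$ elements, while $\frac{q^{3+1}-1}{q^{1+1}-1}=q^2+1$ by Proposition~\ref{combprop-2}), and that a regulus also consists of precisely $q+1$ mutually skew lines. Thus it suffices to show that the regulus $\mathscr{R}$ is contained in $\mathscr{S}_W$; equality of cardinalities then forces $\mathscr{R}=\mathscr{S}_W$, and regularity is immediate because any three lines of the spread reproduce the same regulus. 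To prove the inclusion, I would argue that every transversal line meeting three elements of the spread $\mathscr{S}_W$ must, by the geometric/induced-spread property and the defining transversal property of a regulus, meet all spread elements in $W$, so that the spread lines through the transversals sweep out exactly the regulus.

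The main obstacle is establishing the inclusion $\mathscr{R}\subseteq\mathscr{S}_W$ cleanly, since the definitions of regulus and of geometric spread both involve transversal lines but phrase the closure condition differently. I would reconcile them by noting that in $\mathrm{PG}(3,q)$ a line spread is regular precisely when it is the union of reguli through any three of its members, and that the geometric hypothesis supplies exactly the coherence needed: the induced spread $\mathscr{S}_W$ has $q^2+1=(q+1)$ elements only if we are in the three-dimensional case, which we are. The delicate wrinkle to check carefully is the count and the claim that the $q+1$ spread lines in $W$ are mutually skew and that no transversal can escape $W$—both of which follow from the definition of ``induces a spread,'' but deserve explicit verification so that the regulus and the spread-in-$W$ genuinely coincide rather than merely overlap.
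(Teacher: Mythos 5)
Your proposal breaks down on a cardinality confusion that then drives the whole strategy. A regulus of $\mathrm{PG}(3,q)$ has $q+1$ lines, whereas a line spread of $\mathrm{PG}(3,q)$ --- hence the induced spread $\mathscr{S}_W$ in $W=\left\langle X,Y\right\rangle$ --- has $\frac{q^4-1}{q^2-1}=q^2+1$ lines (Proposition \ref{combprop-2}); these numbers never coincide for a prime power $q$. So your pivotal step ``$\mathscr{R}\subseteq\mathscr{S}_W$ plus equality of cardinalities forces $\mathscr{R}=\mathscr{S}_W$'' is false, as is the later assertion that ``$\mathscr{S}_W$ has $q^2+1=(q+1)$ elements'' in the three-dimensional case. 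Regularity asserts only the \emph{proper} containment of each $(q+1)$-element regulus in the $(q^2+1)$-element spread. The same confusion sinks your transversal argument: a transversal has only $q+1$ points, so it cannot possibly ``meet all spread elements in $W$,'' of which there are $q^2+1$ pairwise disjoint ones. It also distorts the quotient-space picture: a regulus contained in $\mathscr{S}_W$ corresponds not to the full large line (a $\mathrm{PG}(1,q^2)$ with $q^2+1$ large points) but to a Baer subline $\mathrm{PG}(1,q)$ of it.

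There is also a structural reason why no repair along your lines can work: the only substantive hypothesis you invoke is the induced-spread (geometric) property \emph{restricted to $W$}, and inside $W$ that property is vacuous --- by the paper's definition every line spread of $\mathrm{PG}(3,q)$ is geometric, since $\left\langle X,Y\right\rangle$ is then the whole space --- yet non-regular line spreads of $\mathrm{PG}(3,q)$ exist for every $q\geq 3$ (e.g.\ Hall spreads, or a regular spread with one regulus reversed). So regularity of $\mathscr{S}_W$ genuinely requires the global structure of $\mathscr{S}$ in the ambient space of dimension $n\geq 5$, which your sketch never uses. Note that the paper itself gives no proof of Lemma \ref{regular}; it cites Segre \cite{MR0169117}. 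The actual route is: a geometric line spread arises from field reduction, i.e.\ from a $\mathrm{GF}(q^2)$-vector-space structure on the underlying $\mathrm{GF}(q)$-space, so that $\mathscr{S}_W$ is the Desarguesian spread of $W\cong\mathrm{PG}(3,q)$ obtained from $\mathrm{PG}(1,q^2)$; three lines of $\mathscr{S}_W$ then correspond to three points of $\mathrm{PG}(1,q^2)$, the unique Baer subline through those points corresponds to a regulus contained in $\mathscr{S}_W$, and by uniqueness of the regulus through three pairwise skew lines this regulus is $\mathscr{R}(\ell_1,\ell_2,\ell_3)$. None of these ingredients appears in your proposal.
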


Combining the cited results of Beutelspacher and Segre, we prove a lemma that plays a
crucial role in the proof of the lower bound in Theorem \ref{thm:uno}.

If $n=3\cdot 2^i-1$ $(i=1,2,\dots )$ then 
$n+1=(2^{i}-1+1)(2+1),$ hence the projective space $\mathrm{PG}(n,q)$ admits a 
geometric $t$-spread $\mathscr{S}^t$ with $t=2^{i}-1$. 
The large points and large lines form a projective plane
$\Pi_{\mathscr{S}^t}$ of order $q^{t+1}$. 
Consider the lines of $\Pi_{\mathscr{S}^t}$ and denote the corresponding
$(2^{i+1}-1)$-subspaces of $\mathrm{PG}(n,q)$ by $\mathfrak{L}_j$ 
($j=1,\dots, q^{2t+2}+q^{t+1}+1$). 
The $t$-spread $\mathscr{S}^t$ is geometric, therefore for all $j$ the elements $X$ of $\mathscr{S}^t$ 
with $X\cap \mathfrak{L}_j\not=\emptyset$ form a $t$-spread of $\mathfrak{L}_j$ 
which will be denoted by $\mathscr{S}^t_{j}$.
The spread $\mathscr{S}^t_{j}$ induces a special line packing of  
$\mathfrak{L}_j.$

\begin{lemma} \label{Packing}
Let $\mathrm{PG}(n,q)$ be the finite projective space of dimension $n=3\cdot 2^i-1$ $(i=1,2,\dots )$. 
Then there exists a geometric $t$-spread $\mathscr{S}^t$ with $t=2^{i}-1$ having the property that 
any finite projective subspace $\mathfrak{L}_j$ admits a line packing $\mathscr{P}_j$ 
such that the set of lines contained in the elements of $\mathscr{S}^t_{j}$ is the 
union of elements of some line spreads of $\mathscr{P}_j$.
\end{lemma}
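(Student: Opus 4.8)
The plan is to treat one large line $\mathfrak{L}_j$ at a time; since the construction below depends only on the isomorphism type of the pair $(\mathfrak{L}_j,\mathscr{S}^t_j)$, proving the claim for a single $\mathfrak{L}_j$ proves it for all of them. Recall that $\mathfrak{L}_j\cong\mathrm{PG}(2^{i+1}-1,q)$ and that $\mathscr{S}^t_j$ is a geometric $t$-spread of $\mathfrak{L}_j$ with $t=2^{i}-1$, consisting of $q^{t+1}+1$ pairwise disjoint members $X_1,\dots,X_{q^{t+1}+1}$, each isomorphic to $\mathrm{PG}(2^{i}-1,q)$, whose large points form the line $\mathrm{PG}(1,q^{t+1})$. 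I call a line of $\mathfrak{L}_j$ \emph{interior} if it lies in some $X_k$; these are exactly the lines ``contained in $\mathscr{S}^t_j$''. I would take $\mathscr{S}^t$ to be the Desarguesian spread obtained by field reduction from $\mathrm{GF}(q^{t+1})=\mathrm{GF}(q^{2^{i}})$, so that $\mathscr{S}^t_j$ becomes the Desarguesian $t$-spread, and I would prove the following self-contained statement by induction on $i$, for every prime power $q$: the Desarguesian geometric $(2^{i}-1)$-spread of $\mathrm{PG}(2^{i+1}-1,q)$ admits a line packing in which the interior lines form a union of full spreads.

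For the base case $i=1$ the space is $\mathrm{PG}(3,q)$ and $\mathscr{S}^t_j$ is, by Lemma \ref{regular}, a regular line spread; Theorem \ref{Beutelspacher} extends it to a packing in which it occurs as a single spread, and since here each member $X_k$ is itself a line, that single spread is exactly the set of interior lines.

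For the inductive step I would introduce the finer geometric line spread $\mathscr{G}$ of $\mathfrak{L}_j$ coming from the intermediate field $\mathrm{GF}(q^{2})\subset\mathrm{GF}(q^{2^{i}})$. Because $\mathrm{GF}(q^{2})\subseteq\mathrm{GF}(q^{t+1})$, the spread $\mathscr{G}$ refines $\mathscr{S}^t_j$ (each $X_k$ is a union of members of $\mathscr{G}$), its large space is $\Pi_{\mathscr{G}}\cong\mathrm{PG}(2^{i}-1,q^{2})$, and the images of the $X_k$ form the Desarguesian geometric $(2^{i-1}-1)$-spread $\bar{\mathscr{S}}$ of $\Pi_{\mathscr{G}}$. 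Applying the induction hypothesis over the field $q^{2}$ gives a packing $\bar{\mathscr{P}}$ of $\Pi_{\mathscr{G}}$ in which the $\bar{\mathscr{S}}$-interior lines form a union of some spreads $\bar{\mathscr{F}}_{1},\dots,\bar{\mathscr{F}}_{\bar m}$. The large lines of $\mathscr{G}$ are $3$-subspaces $L\cong\mathrm{PG}(3,q)$ whose induced spread $\mathscr{G}|_{L}$ is regular (Lemma \ref{regular}), so Theorem \ref{Beutelspacher} extends each to a packing $\mathscr{P}_{L}$ containing $\mathscr{G}|_{L}$.

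The assembly, which I expect to be the main obstacle, goes as follows. I would delete the shared class $\mathscr{G}|_{L}$ from every $\mathscr{P}_{L}$, leaving $q^{2}+q$ spreads per large line, and then, for each spread $\bar{\mathscr{F}}$ of $\bar{\mathscr{P}}$ and each common index $s\in\{1,\dots,q^{2}+q\}$, form the union over $L\in\bar{\mathscr{F}}$ of the $s$-th remaining spread of $\mathscr{P}_{L}$; adding $\mathscr{G}$ itself as one further class produces a candidate packing of $\mathfrak{L}_j$. The delicate points to verify are that this is genuinely a parallelism (a count shows the number of classes is $\frac{q^{2^{i+1}-1}-1}{q-1}$, and one must check that every line of $\mathfrak{L}_j$ is covered exactly once, using that each $\mathscr{G}$-transversal line lies in a unique large line) and, crucially, that the interior lines come out as a union of classes. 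For the latter I would observe that a $\mathscr{G}$-transversal interior line lies in a large line $L\subseteq X_k$, which corresponds precisely to a $\bar{\mathscr{S}}$-interior line of $\Pi_{\mathscr{G}}$, hence to one of $\bar{\mathscr{F}}_{1},\dots,\bar{\mathscr{F}}_{\bar m}$; together with $\mathscr{G}$ (all of whose members are interior, since $\mathscr{G}$ refines $\mathscr{S}^t_j$), the classes built from $\bar{\mathscr{F}}_{1},\dots,\bar{\mathscr{F}}_{\bar m}$ account for every interior line and for no other line, which closes the induction. Matching the two notions of ``interior'' across the field-reduction step, and the bookkeeping that isolates $\mathscr{G}$ to avoid double-counting its members, is where the real work lies.
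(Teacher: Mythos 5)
Your proposal is correct and takes essentially the same route as the paper's own proof: an induction on $i$ through the tower $\mathrm{GF}(q)\subset\mathrm{GF}(q^{2})\subset\cdots$, using Segre's geometric line spread from the intermediate field $\mathrm{GF}(q^{2})$, Lemma \ref{regular} plus Theorem \ref{Beutelspacher} to get packings of the $3$-subspaces extending the induced regular spreads, and then assembling a packing by matching the indices of these local packings across all $3$-subspaces lying in a common spread of the quotient geometry. The only differences are organizational: you induct on the local statement about a single large line $\mathrm{PG}(2^{i+1}-1,q)$ with its Desarguesian spread rather than on the ambient $\mathrm{PG}(3\cdot 2^{i}-1,q)$ as the paper does, and your bookkeeping (keeping $\mathscr{G}$ as a single class and deleting $\mathscr{G}|_{L}$ from every local packing $\mathscr{P}_{L}$) is in fact tidier than the paper's displayed assembly formula, which formally lists the class $\mathscr{T}_{j,l,0}$ -- the same spread for every $l$ -- once for each $l\leq u$.
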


\begin{proof}
Since $n+1=(1+1)((3\cdot 2^{i-1}-1)+1),$ it follows from Theorem \ref{segre} that $\mathrm{PG}(3\cdot 2^i-1,q)$ admits a 
geometric line spread $\mathscr{S}$. The elements of $\mathscr{S}$ and the $3$-subspaces $\left\langle X,Y|X,Y\in \mathscr{S},X\not=Y\right\rangle$ can be considered, respectively, as points and lines of a $(3\cdot 2^{i-1}-1)$-dimensional space $\Pi_{\mathscr{S}}$ of order $q^2$.  
Denote the $3$-subspaces of $\mathrm{PG}(3\cdot 2^i-1,q)$ corresponding to the lines of 
$\Pi_{\mathscr{S}}$ by $\mathfrak{U}_j$, where $j=1,\dots ,\genfrac[]{0pt}{2}{3\cdot 2^{i-1}}{2}_{q^2}.$ 
Since $\mathscr{S}$ is a geometric spread, as a consequence of Lemma \ref{regular}, we have that the elements $X$ of $\mathscr{S}$ with $X\cap \mathfrak{U}_j\not=\emptyset$ 
form a regular line spread of $\mathfrak{U}_j$ 
which will be denoted by $\mathscr{S}_{\mathfrak{U}_j}$. Moreover, by Theorem \ref{Beutelspacher}, we conclude that the $3$-space 
$\mathfrak{U}_j$ admits a packing $\mathscr{P}_j$ such that $\mathscr{S}_{\mathfrak{U}_j} \in \mathscr{P}_j.$ 
For $k=1,2,\dots ,q^2+q$ let $\mathscr{S}_{j,k}$ be the other spreads of $\mathscr{P}_j,$ hence 
$$\mathscr{P}_j=\{ \mathscr{S}_{\mathfrak{U}_j},\mathscr{S}_{j,1},\dots,\mathscr{S}_{j,q^2+q}\} .$$

We claim that the lines contained in the elements of the set
$$\mathcal{P}=\bigcup _{j=1}^{q^4+q^2+1}\left( \mathscr{P}_j\setminus 
\{ \mathscr{S}_{\mathfrak{U}_j}\} \right) \cup \mathscr{S}$$
is equal to the line set of $\mathrm{PG}(n,q).$ The lines of $\mathscr{S}$ obviously appear in $\mathcal{P}$ exactly once.
If a line $\ell \notin \mathscr{S}$, then $\ell $ lies in a unique subspace of type $\mathfrak{L}_j.$ 
Namely, if the lines $e,f,g,h\in \mathscr{S}$ meet $\ell $ then $\ell \subset \left\langle e,f\right\rangle$ and
$\ell \subset \left\langle g,h\right\rangle ,$ but this means that 
$g\cap \left\langle e,f\right\rangle \neq \emptyset$ and 
$h\cap \left\langle e,f\right\rangle \neq \emptyset$. Since $\mathscr{S}$ is geometric
this implies that $g$ and $h$ are contained in $\left\langle e,f\right\rangle $ and
therefore $\left\langle e,f\right\rangle =\left\langle g,h\right\rangle .$
But $\mathcal{P}$ contains exactly one packing of $\mathfrak{L}_j,$ hence each line of
$\mathrm{PG}(n,q)$ appears in $\mathcal{P}$ exactly once.

Now, we prove the statement of the lemma by induction on $i$.

If $i=1$ then it follows from Theorem \ref{segre} that $\mathrm{PG}(5,q)$ admits a 
geometric line spread $\mathscr{S}$. The elements of $\mathscr{S}$ and the $3$-spaces $\left\langle X,Y|X,Y\in \mathscr{S},X\not=Y\right\rangle$ can be considered as points and lines of a plane $\Pi_{\mathscr{S}}$ of order $q^2$, respectively.
Denote the $3$-spaces of $\mathrm{PG}(5,q)$ corresponding to the lines of 
$\Pi_{\mathscr{S}}$ by $\mathfrak{L}_j$, where $j=1,\dots , q^4+q^2+1.$ 
Since $\mathscr{S}$ is a geometric spread, Lemma \ref{regular} gives that
the elements $X$ of $\mathscr{S}$ with $X\cap \mathfrak{L}_j\not=\emptyset$ 
form a regular line spread of $\mathfrak{L}_j$ 
which will be denoted by $\mathscr{S}_{\mathfrak{L}_j}$.
Because of Theorem \ref{Beutelspacher} the $3$-space 
$\mathfrak{L}_j$ admits a packing $\mathscr{P}_j$ such that $\mathscr{S}_{\mathfrak{L}_j} \in \mathscr{P}_j.$ 
For $i=1,2,\dots ,q^2+q$ let $\mathscr{S}_{j,1}$ be the other spreads of $\mathscr{P}_j,$ hence 
$$\mathscr{P}_j=\{ \mathscr{S}_{\mathfrak{L}_j},\mathscr{S}_{j,1},\dots,\mathscr{S}_{j,q^2+q}\} .$$

Consider now the case $i>1$ and let us assume that the assertion of Lemma \ref{Packing} is proved for all $i'<i$.
Since $n+1=3\cdot 2^{i}=(1+1)(3\cdot 2^{i-1}-1+1)$, by Theorem \ref{segre}, 
$\mathrm{PG}(n,q)$ admits a geometric $1$-spread $\mathscr{S}$. As before, we consider the elements $X$ of $\mathscr{S}$ and the $3$-subspaces 
$\left\langle X,Y\, | \, X,Y\in \mathscr{S},X\not=Y\right\rangle$ as points and lines of a ($3\cdot 2^{i-1}-1$)-space $\Pi_{\mathscr{S}}$ of order $q^{2}$, respectively.
Denote the lines of $\Pi_{\mathscr{S}}$ by $\mathfrak{V}_k$ $(k=1,\dots,M)$ where $M$ 
is the number of lines of $\Pi_{\mathscr{S}}$. 
The spread $\mathscr{S}$ is geometric, therefore the elements $X$ of $\mathscr{S}$ 
with $X\cap \mathfrak{V}_k\not=\emptyset$ form a spread of $\mathfrak{V}_k$ 
which will be denoted by $\mathscr{S}_{k}$. By Lemma \ref{regular}, $\mathscr{S}_{k}$ 
is a regular spread of $\mathfrak{V}_k$. According to Theorem \ref{Beutelspacher}, 
in all $\mathfrak{V}_k$ there exists 
a packing $\mathscr{P}_{\mathfrak{V}_k}$ of $\mathfrak{V}_k$ 
which contains $\mathscr{S}_k$ as one of its spreads. Let this packing be
\[\mathscr{P}_{\mathfrak{V}_k}=\{\mathscr{S}_{k,0},\dots,\mathscr{S}_{k,q^2+q}\}.\]
with $\mathscr{S}_{k,0}=\mathscr{S}_{k}$.

Hence -by induction- $\Pi_{\mathscr{S}}$ admits a basic construction $\mathcal{C}_{i-1}$ 
with the property that any finite projective subspace $\mathfrak{U}_j$ 
admits a packing $\mathscr{P}_j$ such that the lines contained in the elements of 
$\mathscr{S}^t_{j}$ 
are the union of elements of $\mathscr{P}_j$. Let $\mathscr{S}^t_{j}=\{\mathscr{T}_{j,1},\dots,\mathscr{T}_{j,u}\}$ then $\mathscr{P}_j=\mathscr{S}^t_{j}\cup\{\mathscr{T}_{j,u+1},\dots,\mathscr{T}_{j,v}\}$ where $v$ is the number of $1$-spreads in $\mathfrak{U}_j$ of $\Pi_{\mathscr{S}}$. 

Recall that each line of $\Pi_{\mathscr{S}}$ is a $3$-subspace of $\mathrm{PG}(n,q)$. If $\mathscr{T}_{j,l}=\{\mathfrak{u}_{l(1)},\dots,\mathfrak{u}_{l(w)}\colon 1\leq l\leq v\}$ where $w$ is the number of lines in a $1$-spread of $\mathfrak{U}_j$ (as a subspace of $\Pi_{\mathscr{S}}$), then $\mathscr{T}_{j,l,m}=\{\mathscr{S}_{l(1),m},\dots,\mathscr{S}_{l(w),m}\colon 0\leq m\leq q^2+q\}$ is a $1$-spread of $\mathfrak{U}_j$ (as a subspace of $\mathrm{PG}(n,q)$).

We construct the following packing $\mathscr{P}_j$ of $\mathfrak{U}_j$ (as subspaces of $\mathrm{PG}(n,q)$):

\[\mathscr{P}_j=\underset{l=1}{\overset{u}{\sqcup}}\underset{m=0}{\overset{q^2+q}{\sqcup}}\mathscr{T}_{j,l,m}\cup\underset{l=u+1}{\overset{v}{\sqcup}}\underset{m=1}{\overset{q^2+q}{\sqcup}}\mathscr{T}_{j,l,m}.\]
By construction, the set $\underset{l=1}{\overset{u}{\sqcup}}\underset{m=0}{\overset{q^2+q}{\sqcup}}\mathscr{T}_{j,l,m}$ contains all the lines of $\mathscr{S}^t_{j}$ and the lemma follows.
\end{proof}


\section{On line colorings of projective spaces} \label{section3}

First, we introduce some notions that we use to prove our results. 
Let $\mathcal{L}$ be the set of lines of $\mathrm{PG}(n,q)$ and $\mathcal{P}$ be its set of points. 
Given a coloring $\varsigma\colon \mathcal{L} \to [k]$ with $k$ colors, we say that a point $p\in  \mathcal{P}$ 
is an \emph{owner} of a set of colors $C\subseteq [k]$  whenever for every $c\in C$ there is a $q\in \mathcal{P}\setminus \{ p\} $ such that $\varsigma(\left\langle p,q\right\rangle) = c$.
Therefore, $\varsigma$ is a complete coloring if for every pair of colors in $[k]$ 
there is a point in $\mathcal{P}$ which is an owner of both colors.

\subsection{Lower bound}

Now we are ready to prove the lower bound in Theorem \ref{thm:uno}. 
\begin{proof}[Proof of Theorem \ref{thm:uno}, Part 1] Throughout the proof
we use the notations of Section \ref{ProjectiveSpaces}.
Consider the geometric $t$-spread $\mathscr{S}^t$ constructed in Lemma 
\ref{Packing}. Let 
$N=q^{2(t+1)}+q^{t+1}+1$ denote the number of large lines of the corresponding projective plane $\Pi_{\mathscr{S}^t}.$
The space $\mathrm{PG}(n,q)$ admits a basic construction 
$\mathcal{C}_i$ with the property that any finite projective subspace $\mathfrak{L}_j$ admits a packing $\mathscr{P}_j$ 
such that the set of lines contained in the elements of $\mathscr{S}^t_{j}$ is 
the union of elements  of $\mathscr{P}_j$.

Let $r=\genfrac[]{0pt}{2}{t}{1}_q,$ and 
$\mathcal{U}_{j}=\{ \mathscr{S}_{j,1},\dots,\mathscr{S}_{j,r}\} $ 
denote the set of $1$-spreads from $\mathscr{P}_j$ whose union is the
set of all lines that are contained in the elements of $\mathscr{S}^t_{j}.$
Then $\mathscr{P}_j=\mathcal{U}_{j}\sqcup\{\mathscr{S}_{j,r+1},\dots,\mathscr{S}_{j,s}\}$ where $s=\genfrac[]{0pt}{2}{2t+1}{1}_q$ is the number of $1$-spreads of $\mathscr{P}_j$. Note that the number of $1$-spreads in $\mathscr{P}^*_j:=\mathscr{P}_j\setminus\mathcal{U}_{j}$
is $s-r=q^t\genfrac[]{0pt}{2}{t+1}{1}_q.$
Every element $X$ of $\mathscr{S}^t$ is a $t$-subspace, hence, by Theorem \ref{Beutelspacher2}, admits a packing $\mathscr{P}_X=\{\mathscr{S}_{1,X},\dots,\mathscr{S}_{r,X}\}$.
Then the set of lines contained in the elements of the set
$$\left( \bigcup _{j=1}^{N}\mathscr{P}_j^*\right) \cup  
\left( \bigcup _{X\in \mathscr{S}^t}\mathscr{P}_X\right) $$
is equal to $\mathcal{L}.$

Now we define the coloring. We distinguish the two types of spreads.
For a fixed $1\leq j\leq N$ and $r+1\leq k \leq s$ let the lines of 
$\mathscr{S}_{j,k}$ be colored with the color $c_{j,k}=(k-r-1)N+j$. 
This implies that each point of $\mathfrak{L}_j$ is owner of the colors 
$c_{j,k}$ for all $k.$
For a fixed $1\leq m\leq r$ let the lines of 
$\mathscr{S}_{m,X}$ be colored with the color $c_m=(s-r)N+m$ for all $X.$
As $\mathscr{S}_{m,X}$ is a 1-spread of $X\in \mathscr{S}^t,$ the set of 
points on the lines of the set  $\cup _{X\in \mathscr{S}^t}\mathscr{S}_{m,X}$
is equal to $\mathcal{P}.$ Thus each point of $\mathrm{PG}(n,q)$ is owner 
of the colors $c_{m}$ for all $m.$

Observe that the coloring is proper by definition.
We claim that it is also complete. We have to show that
for every pair of colors $\{c, c'\} $ 
there is a point $x$ of $\mathrm{PG}(n,q)$, which is an owner of  both $c$ and $c'$.
This is obvious when at least one of $c$ and $c'$ is of type $c_m.$
Suppose that $c=c_{j,k}$ and $c'=c_{j',k'}.$ Take the subspace  
$\mathfrak{L}_j\cap \mathfrak{L}_{j'}.$ Its dimension is $2t+1$ or $t,$ 
according as $j=j'$ or $j\neq j',$ so it is not the empty set. 
Any point $x\in \mathfrak{L}_j\cap \mathfrak{L}_{j'}$ is the owner of both colors.

In the coloring we use 
$$(s-r)N+r=q^t\frac{q^{n+1}-1}{q-1}+\frac{q^t-1}{q-1}=\frac{q^{n+t+1}-1}{q-1}$$ 
colors. Let $h=\frac{4n+1}{3n}.$ Since 
$n+t+1=\frac{4n+1}{3}=hn$ and $2q^n>\frac{q^{n+1}-1}{q-1}=v,$ we have 
$$\frac{q^{n+t+1}-1}{q-1}=\frac{q^{hn}-1}{q-1}> \frac{1}{2^h}\frac{(2q^n)^{h}}{q}>\frac{1}{2^h}\frac{v^h}{q},$$
hence Inequality (\ref{main1}) of Theorem \ref{thm:uno} holds with $c_n=\frac{1}{2^h},$
and the theorem follows, because $5\leq n$ implies $\frac{4}{3}< h\leq \frac{7}{5}.$ 
\end{proof}

\subsection{Upper bound}\label{sec:upper_bound}

Now we prove the upper bound for the pseudoachromatic index of $\mathrm{PG}(n,q)$.

\begin{proof}[Proof of Theorem \ref{thm:uno}, Part 2]
If $r$ denotes the number of lines through a fixed point, then the total number of 
unordered line-line incidences 
is $v\tbinom{r}{2}$. Hence $v\tbinom{r}{2}\geq \tbinom{\psi'(\mathrm{PG}(n,q))}{2}.$ 
Solving this quadratic inequality we get 
\[\psi'(\mathrm{PG}(n,q))\leq \frac{1+\sqrt{1+4vr(r-1)}}{2}.\]
Since $\sqrt{1+4vr(r-1)}< \sqrt{4vr^2}-1$ and $r=\tfrac{v-1}{q}$, 
this gives 
\[\psi'(\mathrm{PG}(n,q)) <\sqrt{v}r=\frac{1}{q}\sqrt{v}(v-1)\] 
and the result follows.
\end{proof}


\section{The case of $\mathrm{PG}(3,2)$}\label{new}

In this section, we determine the pseudoachromatic index of the smallest finite projective space, 
$\mathrm{PG}(3,2)$, in a pure combinatorial way, without using any computer aided calculations. 
To do this, we need some lemmas about pencils and null polarities.

\begin{definition}
Let $\Pi $ be a plane and $P\in \Pi $ be a point in $\mathrm{PG}(3,q).$
A \emph{pencil} with carrier $P$ in $\Pi $ is the set of the $q+1$ lines of $\mathrm{PG}(3,q)$
through $P$ that are contained in $\Pi .$
\end{definition}   

\begin{lemma}
\label{otbolpencil}
Let $\mathcal{E}$ be a set of five lines in $\mathrm{PG}(3,2).$ If any two lines of $\mathcal{E}$ 
have a point in common then $\mathcal{E}$ contains a pencil.
\end{lemma}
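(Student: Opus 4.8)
The plan is to reduce the lemma to a short counting argument inside the Fano plane $\mathrm{PG}(2,2)$, after first invoking the classical dichotomy that a family of pairwise intersecting lines in a projective space is either concurrent or coplanar.

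First I would establish this dichotomy for $\mathcal{E}$. If the five lines are not all concurrent, choose two of them, say $\ell_1,\ell_2$, meeting at a point $P$, and let $\pi=\langle \ell_1,\ell_2\rangle$ be the plane they span. Since $\mathcal{E}$ has no common point, not every line passes through $P$, so there is a line $\ell_3\in\mathcal{E}$ avoiding $P$; it meets $\ell_1$ and $\ell_2$ in two distinct points of $\pi$ (they cannot coincide, as that common point would be $P$), hence $\ell_3\subset\pi$ and $\ell_1,\ell_2,\ell_3$ bound a nondegenerate triangle in $\pi$. Every remaining line of $\mathcal{E}$ meets all three of these lines; since their intersection $\ell_1\cap\ell_2\cap\ell_3$ is empty, a further line cannot meet all three sides in one single point, so at least two of its intersection points are distinct points of $\pi$, forcing that line into $\pi$ as well. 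Thus $\mathcal{E}\subset\pi$. This yields exactly two cases: all five lines concurrent, or all five coplanar.

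In the concurrent case, with common point $P$, I would pass to the quotient plane of $P$: the lines and planes through $P$ form a projective plane isomorphic to $\mathrm{PG}(2,2)$, in which the five members of $\mathcal{E}$ appear as five points, and a pencil with carrier $P$ (three coplanar lines through $P$) appears as a line, i.e. three collinear points. Because a set of points with no three collinear in $\mathrm{PG}(2,2)$ is an arc and therefore has at most $q+2=4$ points, five points must contain three collinear ones, which pull back to a pencil. In the coplanar case, with common plane $\pi\cong\mathrm{PG}(2,2)$, the five lines are five lines of a Fano plane; by the self-duality of $\mathrm{PG}(2,2)$ the dual arc bound says that at most four lines can have no three concurrent, so three of the five pass through a common point, and being coplanar they form a pencil.

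The routine ingredient is the hyperoval bound in $\mathrm{PG}(2,2)$, namely that a maximal arc has $q+2=4$ points, which I would either quote or verify directly by exhibiting the unique $4$-arc and noting that five of the seven points cannot avoid every line. I expect the only step requiring genuine care to be the dichotomy — specifically the observation that a line meeting all three sides of the triangle $\ell_1,\ell_2,\ell_3$ cannot do so at a single point — since that is precisely what forces every remaining line into $\pi$ and thereby closes the coplanar case.
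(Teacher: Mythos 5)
Your proof is correct, and its skeleton coincides with the paper's: the concurrent-or-coplanar dichotomy for pairwise intersecting lines, followed by a finishing argument inside a Fano plane. The execution differs at each step, though. The paper asserts the dichotomy without proof, collapses the concurrent case into the coplanar one by the self-duality of $\mathrm{PG}(3,2)$, and concludes by pigeonhole: the $\binom{5}{2}=10$ pairwise intersection points lie in a plane with only $7$ points, so two pairs coincide, three lines share a point, and a pencil appears. You instead supply the dichotomy argument in full (your observation that a transversal to three lines with empty common intersection must carry two distinct points of the plane, forcing it into the plane, is exactly the justification the paper leaves unstated), you treat the concurrent case directly via the quotient plane at $P$ rather than via duality, and you close both cases with the arc/hyperoval bound that a set with no three collinear points in $\mathrm{PG}(2,2)$ has at most $q+2=4$ points. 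For $q=2$ that bound and the paper's count are interchangeable --- indeed the quickest self-contained verification that no $5$-point arc exists is precisely $\binom{5}{2}=10>7$, which you could substitute for your somewhat loose sketch of "exhibiting the unique $4$-arc" (exhibiting one arc does not by itself prove maximality). In sum, your route buys a fully self-contained proof, including the dichotomy, at the price of quoting one standard fact; the paper's version is shorter because it treats the dichotomy as classical and uses duality to reduce the two cases to one.
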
   

\begin{proof}
Any two lines of $\mathcal{E}$ meet, hence, all the lines in $\mathcal{E}$ are either coplanar or all of them have a point in common.
It follows from Proposition \ref{combprop} that in $\mathrm{PG}(3,2)$ there are seven lines through each
point, and dually, each plane contains seven lines.
Because of the duality, we may assume without loss of generality that the five lines of $\mathcal{E}$ are
coplanar. As each plane contains seven points and $\binom{5}{2}>7,$ at least three lines of 
$\mathcal{E}$ have a point in common, thus 
$\mathcal{E}$ contains a pencil.
\end{proof}

\begin{definition}
Let $\mathrm{PG}(3,q)'$ denote the dual space of $\mathrm{PG}(3,q),$ and let 
$A$ be a $4\times 4$ non-singular matrix over 
$\mathrm{GF}(q)$ that satisfies the equation $A=-A^{\mathrm{T}}$ and
whose all diagonal elements are $0.$

A \emph{null polarity} $\pi \colon \mathrm{PG}(3,q)\rightarrow \mathrm{PG}(3,q)'$ is a collineation which maps 
the point with coordinate vector $\mathbf{x}$ to the point with coordinate vector $\mathbf{x}A.$ 
\end{definition}   

As the points, lines and planes of the dual space are planes, lines and points of the original space, respectively, 
a null polarity maps lines of $\mathrm{PG}(3,q)$ 
to lines of $\mathrm{PG}(3,q)$. 
A null polarity maps intersecting lines to intersecting lines, hence 
the proof of the following statement is straightforward.

\begin{lemma}
\label{haromszogbe}
Let $\pi$ be a null polarity and $\varsigma$ be a 
line-coloring of $\mathrm{PG}(3,q)$. Then $\varsigma$ is complete 
if and only if $\varsigma\circ\pi^{-1}$ is a complete line-coloring 
of $\mathrm{PG}(3,q)'$. 
\end{lemma}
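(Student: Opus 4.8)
The plan is to exploit the single structural fact already isolated in the paragraph preceding the statement: a null polarity $\pi$ induces a bijection between the lines of $\mathrm{PG}(3,q)$ and the lines of $\mathrm{PG}(3,q)'$ under which the intersection relation is preserved. Combined with the tautological identity $\varsigma(\ell)=(\varsigma\circ\pi^{-1})(\pi(\ell))$, this lets me transport a witness for any given pair of colors across $\pi$, so that completeness of $\varsigma$ and completeness of $\varsigma\circ\pi^{-1}$ become literally the same condition read on the two sides of $\pi$.

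First I would restate completeness in the form dictated by the definitions of Section \ref{section3}: a line-coloring $c$ is complete precisely when, for every pair of colors $\{c_1,c_2\}$, there exist two lines meeting in a common point, one colored $c_1$ and the other colored $c_2$, since a common point of two such lines is exactly a point owning both colors. This reformulation makes the intersection relation the only relevant datum, which is what lets $\pi$ do its work.

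For the forward implication, assume $\varsigma$ is complete and fix an arbitrary pair $\{c_1,c_2\}$. By completeness there are intersecting lines $\ell_1,\ell_2$ of $\mathrm{PG}(3,q)$ with $\varsigma(\ell_i)=c_i$. Setting $m_i=\pi(\ell_i)$ produces two lines of $\mathrm{PG}(3,q)'$; since $\pi$ maps intersecting lines to intersecting lines, $m_1$ and $m_2$ meet, while $(\varsigma\circ\pi^{-1})(m_i)=\varsigma(\ell_i)=c_i$. Hence the pair $\{c_1,c_2\}$ is realized by $\varsigma\circ\pi^{-1}$ at the common point of $m_1,m_2$, and as the pair was arbitrary, $\varsigma\circ\pi^{-1}$ is complete.

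The reverse implication will follow by applying the identical argument with $\pi^{-1}$ in place of $\pi$. The only point needing care — and the main obstacle, such as it is — is that the preceding text records intersection preservation in one direction only, whereas the reverse direction needs it for $\pi^{-1}$. I would close this gap by observing that $\pi$ is a bijection on lines carrying the finitely many intersecting pairs into intersecting pairs, hence onto them, so that $\pi^{-1}$ preserves intersections as well (equivalently, a null polarity is involutory). With intersection preservation available in both directions, the forward argument applied verbatim to $\varsigma\circ\pi^{-1}$ and $\pi^{-1}$ recovers completeness of $\varsigma$, and the stated equivalence follows.
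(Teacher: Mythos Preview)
Your proposal is correct and follows exactly the approach the paper intends: the paper records only that a null polarity maps intersecting lines to intersecting lines and declares the proof straightforward, and you have faithfully unpacked that remark, including the minor care needed for the reverse implication. There is nothing to add.
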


\begin{theorem} 
The pseudoachromatic index of $\mathrm{PG}(3,2)$ is equal to $18$, i.e., 
\[\psi ' (\mathrm{PG}(3,2))=18.\]
\end{theorem}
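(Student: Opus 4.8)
The plan is to prove the two inequalities $\psi'(\mathrm{PG}(3,2))\le 18$ and $\psi'(\mathrm{PG}(3,2))\ge 18$ separately. Throughout I would use the combinatorial data coming from Proposition \ref{combprop}: there are $15$ points and $35$ lines, every point lies on $7$ lines and every line carries $3$ points, so a fixed line $\ell$ meets exactly $3\cdot 6=18$ other lines and is skew to the remaining $35-1-18=16$ lines. A complete line-colouring is simply a partition of the $35$ lines into colour classes such that for any two classes some line of one meets some line of the other; this is the language I will work in.

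For the upper bound I would first record a \emph{rainbow lemma}. Suppose a complete colouring uses $k$ colours and that some class is a single line $\{\ell\}$. Every other colour must appear on a line meeting $\ell$, and there are only $18$ such lines, so $k-1\le 18$; moreover, if $k=19$ then the $18$ lines meeting $\ell$ must realise all $18$ remaining colours, so they are \emph{rainbow} (pairwise differently coloured). On the other hand, if no class is a singleton then every class has at least two lines and $k\le\lfloor 35/2\rfloor=17<18$. Hence, as soon as $\psi'(\mathrm{PG}(3,2))\ge 18$, a singleton class exists and $k\le 19$, so it only remains to exclude $k=19$.

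Assume $k=19$. Counting class sizes gives $35=\sum_i|C_i|\ge 2k-s=38-s$, where $s$ denotes the number of singleton classes; thus $s\ge 3$, and there are at least three singleton lines $\ell_1,\ell_2,\ell_3$, which pairwise meet. The rainbow lemma applied to each $\ell_i$ forces every other class to contain \emph{exactly one} line meeting $\ell_i$ (two equally coloured lines meeting $\ell_i$ would violate rainbowness), and the three conditions together pin the classes down very rigidly: for a two-line class $\{m,m'\}$ the incidence patterns of $m$ and $m'$ with $(\ell_1,\ell_2,\ell_3)$ are complementary. I would then split according to the mutual position of $\ell_1,\ell_2,\ell_3$ (concurrent, possibly forming a pencil, versus a triangle in a plane) and use this rigidity to produce two classes all of whose lines are pairwise skew, contradicting completeness. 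Here Lemma \ref{otbolpencil} is the natural tool: in the configurations that arise one finds five pairwise-meeting lines, which must contain a pencil, and the pencil together with the rainbow constraints yields the contradiction, while the null polarity of Lemma \ref{haromszogbe} lets me pass to the dual configuration and thereby halve the case analysis. Turning this rigidity into a clean contradiction is the main obstacle, and I expect it to require the careful, configuration-by-configuration bookkeeping that makes this the longest part of the argument.

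For the lower bound I would exhibit an explicit complete colouring with $18$ colours; the class-size count above shows such a colouring must contain at least one singleton, so I would aim for one single line together with seventeen pairs (or a comparably balanced distribution). To keep the verification short I would build the colouring symmetrically, using a parallelism of $\mathrm{PG}(3,2)$ (its $35$ lines split into $7$ spreads of $5$ mutually skew lines) to organise the pairs, and using the transitive collineation group together with the null polarity (Lemma \ref{haromszogbe}, which sends complete colourings to complete colourings) to cut down the pairs of classes whose mutual intersection must be checked by hand. Once every pair of classes is seen to contain two intersecting lines, completeness follows and the two bounds give $\psi'(\mathrm{PG}(3,2))=18$.
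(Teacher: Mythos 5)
Your outline reproduces the paper's strategy faithfully: the $\le 19$ bound via the singleton/no-singleton dichotomy, the count $35\ge 38-s$ forcing at least three singleton classes, the reduction of the concurrent case to the triangle case by a null polarity (Lemma \ref{haromszogbe}), the role of Lemma \ref{otbolpencil}, and an explicit construction for the lower bound. But the proposal defers exactly the two parts that carry the mathematical content, so it is a plan rather than a proof. First, the exclusion of $k=19$ is never executed: you say the rigidity ``pins the classes down'' and that you ``expect'' a configuration-by-configuration analysis to yield two mutually skew classes, but this analysis is the heart of the theorem. In the paper it occupies several pages: one must first show that no three singleton lines form a pencil (this is \emph{not} a direct application of Lemma \ref{otbolpencil}, which only bounds the number of singletons by four; the pencil exclusion needs a separate counting argument showing that a pencil of singletons forces at least eight classes of size $\ge 3$, hence eleven further singletons, two of which are skew), then fix coordinates for the triangle $ABC$ in the plane at infinity, and finally run a matching analysis (Cases i), ii), (a), (b)) on the classes attached to the pairs $\{A,A'\}$, $\{B,B'\}$, $\{C,C'\}$ to locate two classes with empty intersection. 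Your sketch even misattributes where the contradiction comes from: it arises from this matching bookkeeping, not from finding five pairwise-meeting lines and invoking the pencil lemma.

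Second, the lower bound is also missing: no coloring is written down. Moreover the shape you propose (one singleton plus seventeen pairs, organized by a parallelism into spreads) differs from the paper's construction, which uses four singletons, one class of size two, one class of size five (the remaining lines through the point $D$), and twelve pairs whose points at infinity are $\{A,A'\}$, $\{B,B'\}$, $\{C,C'\}$; whether a complete coloring of your proposed shape exists is itself unproven, and completeness of any candidate still requires checking all pairs of classes (the paper does this with three explicit intersection tables). As it stands, both inequalities rest on work you have announced but not done, so the proposal has genuine gaps on both sides of the equality $\psi'(\mathrm{PG}(3,2))=18$.
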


\begin{proof}
In $\mathrm{PG}(3,2)$ there are three points on each line and there are seven lines through each point,
hence the total number of lines intersecting a fixed line is $3\cdot (7-1)=18.$ Thus if a complete line
coloring contains a color class of size one then the coloring cannot contain more than $1+18=19$ color classes. 
There are 35 lines in $\mathrm{PG}(3,2),$ so the number of color classes containing at least two lines is at
most $\lfloor 35/2\rfloor =17.$ Hence $\psi ' (\mathrm{PG}(3,2))\leq 19.$ 

Now, we prove that $\psi ' (\mathrm{PG}(3,2))\leq 18$.
Suppose to the contrary that there exists a complete coloring $\varsigma$ of $\mathrm{PG}(3,2)$ with $19$
color classes. 

We claim that $\varsigma$ contains three or four color classes of size one 
and no three of the corresponding lines form a pencil.   
By the pigeonhole principle, there are at least 3 color classes of size one. If 
there were at least five color classes of size one in $\varsigma$ then, by Lemma \ref{otbolpencil},
we could choose three color classes such that the corresponding 
lines would form a pencil. 
Suppose that three lines, $\ell _1 ,\ell_2$ and $\ell_3$ 
form a pencil with carrier $P$ in the plane $\Pi ,$
and each of these lines forms a color class of size one. 
Consider the other 16 classes. 
At most 4 of them contain lines through $P$ and at most 
4 of them contain lines in $\Pi .$ Each of the remaining 
at least 8 classes must have size at least 3, 
because they have to meet each $\ell _i$ for $i=1,2,3.$
This implies that these color classes contain altogether $8\times 3=24$
or more lines. As the total number of lines is $35,$ 
this means that each of the remaining $11$ 
color classes contains exactly one line. Hence each of the seven lines 
through $P,$ and each of the $4$ lines in $\Pi $ not through $P$ 
are color classes of size one, but they do not meet, so $\varsigma$ is not 
complete. This contradiction proves the statement.

Choose three color classes of size one and let $\ell _1 ,\ell_2$ 
and $\ell_3$ be the lines in these color classes. Any two of these lines 
have a point in common, but they do not form a pencil, hence 
either they form a triangle, or they have a point in common but they are 
not coplanar. In the latter case apply Lemma \ref{haromszogbe}. 
If the three lines meet in the point $P$ then
after a null polarity $\pi ,$ the lines $\ell ^{\pi }_1 ,\ell ^{\pi }_2$ 
and $\ell ^{\pi }_3$ form a triangle in the plane $P^{\pi }$. 
As $\mathrm{PG}(3,2)$ is isomorphic to its dual space,
it is enough to consider the first case.

From now on, we suppose that $\ell _1 ,\ell_2$ and $\ell_3$ form a triangle $ABC$ 
in the plane $\Pi .$  Let $A',B'$ and $C'$ be the 
third points of the sides of the triangle, respectively, and let 
$D=AA'\cap BB'\cap CC'$ be the seventh point of the plane $\Pi .$ 
Take $\Pi $ as the plane at infinity and consider the remaining
eight points as $\mathrm{AG}(3,2).$ The coordinates of the points in $\Pi $ can be choosen as follow.

\noindent
$A=(0:1:0:0),$ $B=(0:0:1:0),$ $C=(0:0:0:1),$  
$A'=(0:0:1:1),$ 

\noindent
$B'=(0:1:0:1),$ $C'=(0:1:1:0)$ and $D=(0:1:1:1).$

First, suppose that there is a $4^{\mathrm{th}}$ color class of size one and let $\ell _4$ denote the line in this class.  
Then $\ell _4$ must be in $\Pi .$ If it contains one of the points
$A,B$ or $C,$ then a pencil appears, hence the coloring is not complete. So
we may assume that $\ell _4$ is the line $A'B'C'.$ Among the other 15 color classes there are 14 classes of size 2 and one of size 3.
Consider the four lines, say $\ell _5 ,\ell_6, \ell _7$ and $\ell_8,$ 
through $D$ but not in $\Pi .$ If two or three of them formed a color class, then this class would have empty intersection with each
of $\ell _1 ,\ell_2$ and $\ell_3,$ contradiction. So these four lines 
are distributed among at least three color classes and each class of size two 
must contain a line of $\Pi .$ Thus there are two possibilities for these 
color classes:
\begin{itemize}
\item[(a)]
$\{\ell _5 ,\ell_8,AA'\} ,$ $\{\ell _6 ,BB'\} ,$ $\{\ell _7 ,CC'\} ,$
\item[(b)]
$\{\ell _5 ,AA'\} ,$ $\{\ell _6 ,BB'\} ,$ $\{\ell _7 ,CC'\} ,$ 
$\{\ell _8, \ell _9, \ell _{10}\} ,$ where $\ell _9$ is a line through $A$ and
$ \ell _{10}$ is a line through $A'.$
\end{itemize}
Each of the remaining classes contains two lines whose points at infinity cover
$\ell _1 ,\ell_2,\ell_3$ and $\ell_4$. Since no three of these lines have a point in common, each of the remaining classes is incident to $\ell _1 ,\ell_2,\ell_3$ and $\ell_4$ if and only if two points at infinity of these color classes 
coincide with one of the sets $\{ A,A'\} , \{ B,B'\} $ and $\{ C,C'\} .$

If there is no more color class of size one,
then each of the remaining 16 classes has size 2. The pairs of the four lines through $D$
must be the four lines of $\Pi $ distinct from $\ell _1 ,\ell_2,\ell_3.$ If an affine line passes  
on $A',$ then its pair must pass on $A,$ and the same is true for the lines through $\{ B,B'\} $
and $\{ C,C'\} .$

We can summarize these possibilities as follow.

\begin{itemize}
\item
Each of the lines $\ell _1 ,\ell_2$ and $\ell_3$ forms a class of size one.
\item
There are 12 classes such that two points at infinity of these color classes 
coincide with one of the sets $\{ A,A'\} , \{ B,B'\} , \{ C,C'\} .$ 
\item
Each of the pairs $\{\ell _5 ,AA'\} ,$ $\{\ell _6 ,BB'\} $ and $\{\ell _7 ,CC'\} $ 
belong to one color class.
\item
The nineteenth color class contains the line $A'B'C'.$ 
\item
The line $\ell _8$ is ``free".   
\end{itemize}

\smallskip

We can choose the system of reference such that the pair of $AA'$
is the line $DOE$ where $O=(1:0:0:0)$ and $E=(1:1:1:1)$. Let $X=(1:1:0:0)$, $Y=(1:0:1:0),$ 
$Z=(1:0:0:1),$ $K=(1:1:1:0),$ $L=(1:1:0:1)$ and $M=(1:0:1:1)$
be the other affine points of $\mathrm{PG}(3,2)$, see Figure \ref{Fig1}.  The pair 
of the line $CXL$ is either the line $C'O$ or $C'E$. 
As the roles of $O$ and $E$ were symmetric previously, we may assume  
without loss of generality that $C'OK$ is the pair of $CXL$.
\begin{figure}[!htbp]
\begin{center}
\includegraphics[scale=0.73]{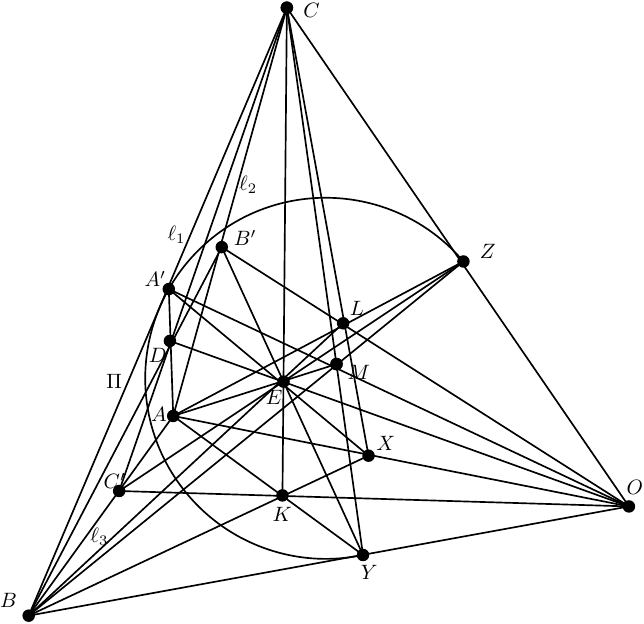}
\caption{\label{Fig1} $\mathrm{PG}(3,2)$, not all lines shown.}
\end{center}
\end{figure}

First, consider the three other classes whose two points in $\Pi $ are $C$ and $C'.$
The affine part of the three lines through
$C$ are $OZ,$ $MY,$ $KE,$ while the affine part of the three lines through $C'$
are $EZ,$ $XY,$ $LM.$
Each of these classes must meet the line $DOE.$ Hence, we need a matching between these two line-triples
such that each pair contains at least one of the points $O$ and $E.$ So the pair of $MY$
must be $EZ.$ There are two possibilities for the remaining two pairs, so the four possible pairs
through $C$ and $C'$ are:
\begin{itemize}
\item[i)]
$(XL, \,OK), \, (MY, \, EZ), \, (OZ, \, XY), \, (KE, \, LM),$
\item[ii)] 
$(XL, \,OK), \, (MY, \, EZ), \, (KE, \, XY), \, (OZ, \, LM).$
\end{itemize}

In Case i) take the four classes whose two points in $\Pi $ are $B$ and $B'.$
The affine parts of the four lines through
$B$ are $OY,$ $EL,$ $XK,$ $ZM,$ while the affine parts of the four lines through $B'$
are $OL,$ $EY,$ $XZ,$ $MK.$ Again, we need a matching such that
each pair contains at least one of the points $O$ and $E,$ and each class 
must meet the four classes belonging to $\{ C,C'\} .$ So the pair of $XK$ is $EY,$
because $(XK, \, OL)$ has empty intersection with $(MY, \, EZ).$
Hence the pair of $ZM$ is $OL.$ The pair of $MK$ is $OY,$
because $(EL, \, MK)$ has empty intersection with $(OZ, \, XY).$
So the affine parts of the four pairs belonging to $\{ B,B'\} $ are
$$(XK, \,EY), \, (ZM, \, OL), \, (OY, \, MK), \, (EL, \, XZ).$$

Take the four classes whose two points in $\Pi $ are $A$ and $A'.$
The affine parts of the four lines through
$A$ are $OX,$ $EM,$ $YK,$ $ZL,$ while the affine part of the four lines through $A'$
are $OM,$ $EX,$ $YZ,$ $LK.$
At least three classes consist of only two lines. Let us look for these classes.
None of the pairs 
$(OX, \, LK),$ $(OX, \, YZ),$ $(EM, \, YZ),$ $(EM, \, LK)$ is good, because
its intersection is empty with 
$(YM, \, EZ),$ $(KE, \, LM),$ $(XL, \, OK),$ $(OZ, \, XY),$ respectively.
In the same way none of the pairs 
$(KY, \, OM),$ $(KY, \, EX),$ $(LZ, \, OM),$ $(LZ, \, EX)$ is good because
their intersections are empty with 
$(EL, \, XZ),$ $(ZM, \, LO),$ $(XK, \, YE),$ $(OY, \, MK),$ respectively.
This means that in the matching there are only four possible pairs containing $OX$ or $EM,$ namely
\begin{equation}
\label{masodik}
(OX, \, OM), \, (OX, \, EX), \, (EM, \, OM), \,(EM, \, EX),
\end{equation}  
and four possible pairs containing $KY$ or $LZ,$ namely
\begin{equation}
\label{elso}
(KY, \, LK), \, (KY, \, YZ), \, (LZ, \, YZ), \,(LZ, \, LK).
\end{equation}
Thus at least one pair from (\ref{masodik}) and  at least one pair from (\ref{elso})
form a color class. 

Now consider the affine part of the two color classes containing $\{ B,B'\} $ and $\{ C,C'\} .$ These are
the lines through $D,$ except $DOE.$ So they consist of the points $X$ and $M,$
$Z$ and $K,$ $L$ and $Y.$ At least one of the two classes contains only one pair of
points. But the pair $\{ X,M\} $ has empty intersection with any class from (\ref{elso}), 
while both pairs $\{ Z,K\} $ and $\{ L,Y\} $ have empty intersection with any class from (\ref{masodik}).
Hence the coloring cannot be complete in Case i).

\smallskip

Now consider Case ii).
Take the four classes whose two points in $\Pi $ are $B,B'.$
The affine parts of the four lines through
$B$ are $OY,$ $EL,$ $XK,$ $ZM,$ while the affine parts of the four lines through $B'$
are $OL,$ $EY,$ $XZ,$ $MK.$ Again we need a matching such that
each pair contains at least one of the points $O$ and $E,$ and each class 
must meet the four classes belonging to $\{ C,C'\} .$ 

So the pair of $XK$ is $OL,$
because $(XK, \, EY)$ has empty intersection with $(OZ, \, LM).$
Hence the pair of $ZM$ is $EY.$ 
We distinguish two cases, according to the pair of $OY.$
So the affine parts of the four pairs belonging to $\{ B,B'\} $ are
\begin{itemize}
\item[(a)]
$(XK, \,LO), \, (ZM, \, EY), \, (OY, \, XZ), \, (EL, \, MK),$
\item[(b)]
$(XK, \,LO), \, (ZM, \, EY), \, (OY, \, MK), \, (EL, \, XZ).$
\end{itemize}
Take the four classes whose two points at infinity are $A$ and $A'.$
The affine parts of the four lines through
$A$ are $OX,$ $EM,$ $YK,$ $ZL,$ while the affine part of the four 
lines through $A'$ are $OM,$ $EX,$ $YZ,$ $LK.$ At least three classes 
consist of only two lines. Let us look for these classes.
In both cases none of the pairs 
$(OX, \, LK),$ $(KY, \, EX),$ $(EM, \, YZ),$ $(LZ, \,OM )$ is good, because
it has empty intersection with 
$(YM, \, EZ),$ $(OZ, \, LM),$ $(XK, \, LO),$ $(KZ, \, XY),$ respectively.

Furthermore, in Case (a) none of the pairs 
$(OX, \, YZ),$ $(EM, \, LK)$ is good, because
its intersection is empty with 
$\{ O,X, Y,Z\} \cap \{E,L,M,K\} =\{E,M,L,K\} \cap \{O,Y,X,Z\} =\emptyset .$
Thus we get four possible pairs containing $OX$ or $EM:$
\begin{equation}
\label{harmadik}
(OX, \, OM), \, (OX, \, EX), \, (EM, \, OM), \,(EM, \, EX),
\end{equation}
and six possible pairs containing $KY$ or $LZ:$
$$(KY, \, OM), \, (LZ, \, EX),\,$$
\begin{equation}
\label{negyedik}
 (KY, \, LK), \, (KY, \, YZ), \, (LZ, \, LK), \,(LZ, \, YZ).
\end{equation}
If either $(KY, \, OM)$ or $(LZ, \, EX)$ belongs to the matching, then only one more pair from 
(\ref{harmadik}) can be in it, hence at least one more pair from (\ref{negyedik}) also
belongs to the matching. 
Thus at least one pair from (\ref{harmadik}) and at least one pair from (\ref{negyedik})
form a color class.

In Case (b) none of the pairs 
$(LZ, \, EX),$ $(KY, \, OM)$ is good, because
it has empty intersection with 
$(OY, \, MK),$ $(EL, \, XZ),$ respectively.
Thus we get six possible pairs containing $OX$ or $EM:$
$$(OX,\, YZ),\, (EM, \, LK),$$ 
\begin{equation}
\label{otodik}
(OX, \, OM), \, (OX, \, EX), \, (EM, \, OM), \,(EM, \, EX),
\end{equation}
and four possible pairs containing $KY$ or $LZ:$
\begin{equation}
\label{hatodik}
 (KY, \, LK), \, (KY, \, YZ), \, (LZ, \, LK), \,(LZ, \, YZ).
\end{equation}
If either $(OX, \, YZ)$ or $(EM, \, LX)$ belongs to the matching, then only one more pair from 
(\ref{otodik}) can be in it, hence at least one more pair from (\ref{hatodik}) also
belongs to the matching. 
Thus at least one pair from (\ref{otodik}) and  at least one pair from (\ref{hatodik})
form a color class. 

Finally, in both Cases (a) and (b), consider the affine part of the 
two color classes containing $\{B,B'\} $ and $\{C,C'\} .$ 
These are
the lines through $D,$ except $DOE.$ So they consist of the points $X$ and $M,$
$Z$ and $K,$ $L$ and $Y.$ At least one of the two classes contains only one pair of
points. But the pair $\{ X,M\} $ has empty intersection with any class from (\ref{negyedik}) and from (\ref{hatodik}), 
while both of the pairs $\{Z,K\} $ and $\{ L,Y\} $ have empty intersection with any class 
from (\ref{harmadik}) and from (\ref{otodik}).
Hence the coloring cannot be complete in Case ii).

\smallskip

Now, we present a complete coloring with 18 color classes. 

Let the lines $\ell _1 ,\ell_2,\ell_3$ and $\ell _4=A'B'C'$ form color classes of size one. 
These classes are denoted by $\mathcal{C}_1,\mathcal{C}_2,\mathcal{C}_3$ 
and $\mathcal{C}_4,$ respectively. The class $\mathcal{C}_5$ consists of the lines $AA'D$ and $OED,$  while the class $\mathcal{C}_6$ consists of 
the remaining five lines through $D.$ Any two of these six classes obviously have non-empty intersection.
The remaining twelve classes of size two are formed by the $3\times 4$ pairs of lines whose points at infinity are $\{ A,A'\},$ 
$\{ B,B'\} $ and
$\{ C,C'\} ,$ respectively. The affine parts of these classes are the following:
$$\mathcal{C}_{A1} \colon \, (OX,EX), \quad  \mathcal{C}_{A2} \colon \, (OM,EM),\quad   
\mathcal{C}_{A3} \colon \, (YK,YZ),\quad   \mathcal{C}_{A4} \colon \, (LZ,LK);$$
$$\mathcal{C}_{B1} \colon \, (OY,MX), \quad  \mathcal{C}_{B2} \colon \, (XK,EY),\quad   
\mathcal{C}_{B3} \colon \, (ZM,OL),\quad   \mathcal{C}_{B4} \colon \, (EL,XZ);$$
$$\mathcal{C}_{C1} \colon \, (OZ,XY), \quad  \mathcal{C}_{C2} \colon \, (XL,OK),\quad   
\mathcal{C}_{C3} \colon \, (YM,EZ),\quad   \mathcal{C}_{C4} \colon \, (KE,LM).$$
If $1\leq i\leq 6$ then $\mathcal{C}_i$ contains at least one element of each of the 
pairs $\{ A,A'\} ,$ $\{ B,B'\} $ and $\{ C,C'\} ,$ and any two color classes belonging to the same
quadruple of classes of type $\mathcal{C}_{Qi}$ also intersect each other. Hence it is enough to show that   
$\mathcal{C}_{Qi}$ and $\mathcal{C}_{Rj}$ have non-empty intersection if $Q\neq R.$ The three parts of Table \ref{cases} give one point of intersection in each case.
\begin{table}[h]
\label{cases}
\begin{center}
\begin{tabular}{l|cccc|}
  & $\mathcal{C}_{B1}$ & $\mathcal{C}_{B2}$ & $\mathcal{C}_{B3}$ & $\mathcal{C}_{B4}$  \\
\hline
$\mathcal{C}_{A1}$ & O & X & O &  X \\
$\mathcal{C}_{A2}$ & M & E & M &  E \\
$\mathcal{C}_{A3}$ & Y & Y & Z & Z  \\
$\mathcal{C}_{A4}$ & K & K & L & l \\
\hline
\end{tabular}
\quad \quad
\begin{tabular}{l|cccc|}
  & $\mathcal{C}_{C1}$ & $\mathcal{C}_{C2}$ & $\mathcal{C}_{C3}$ & $\mathcal{C}_{C4}$  \\
\hline
$\mathcal{C}_{A1}$ & O & O & E &  E \\
$\mathcal{C}_{A2}$ & O & O & M &  M \\
$\mathcal{C}_{A3}$ & Y & K & Y & K  \\
$\mathcal{C}_{A4}$ & Z & L & Z & l \\
\hline
\end{tabular}

\bigskip
\medskip
\begin{tabular}{l|cccc|}
  & $\mathcal{C}_{C1}$ & $\mathcal{C}_{C2}$ & $\mathcal{C}_{C3}$ & $\mathcal{C}_{C4}$  \\
\hline
$\mathcal{C}_{B1}$ & O & O & M &  M \\
$\mathcal{C}_{B2}$ & X & X & E &  E \\
$\mathcal{C}_{B3}$ & O & O & M & M  \\
$\mathcal{C}_{B4}$ & X & X & E & E \\
\hline
\end{tabular}
\caption{\label{cases} Points of intersections of $\mathcal{C}_{Qi}$ and $\mathcal{C}_{Rj}$.}
\end{center}
\end{table}

This proves that the coloring is complete.
\end{proof}

\end{document}